\documentclass[12pt,
]{amsart}
\usepackage{
amssymb, graphicx
}
\usepackage{mathrsfs}
\usepackage{amsthm}
\usepackage{graphicx,color}

%
\newtheorem{theorem}{Theorem}
\newtheorem{lemma}{Lemma}
\newtheorem{proposition}{Proposition}

\newtheorem{remark}{Remark}
\theoremstyle{remark}

\date{\today}

\setlength{\oddsidemargin}{0.0in}
\setlength{\evensidemargin}{0.0in}
\setlength{\textwidth}{6.5in}
\setlength{\topmargin}{0.0in}
\setlength{\textheight}{8.5in}

\setlength{\marginparwidth}{0.8in}

\newcommand{\slu}[1]{\textcolor[rgb]{0.00,0.00,1.00}{#1}}

\title[Increasing stability of a linearized IBVP]{Increasing stability of a linearized inverse boundary value problem for a nonlinear Schr\"odinger equation on transversally anisotropic manifolds}
  \author[S. Lu]{Shuai Lu}
\address{School of Mathematical Sciences, SKLCAM and LMNS, Fudan University, Shanghai 200433, China}
\email{slu@fudan.edu.cn}
\thanks{S. Lu is supported by NSFC (No.11925104), Science and Technology Commission of Shanghai Municipality (21JC1400500). }
  \author[J. Zhai]{Jian Zhai}
  \address{Corresponding author. School of Mathematical Sciences, Fudan University, Shanghai 200433, China}
  \email{jianzhai@fudan.edu.cn}

\begin{document}
\begin{abstract}
We consider the problem of recovering a nonlinear potential function in a nonlinear Schr\"odinger equation on transversally anisotropic manifolds from the linearized Dirichlet-to-Neumann map at a large wavenumber. By calibrating the complex geometric optics (CGO) solutions according to the wavenumber, we prove the increasing stability of recovering the coefficient of a cubic term as the wavenumber becomes large.
\end{abstract}
\keywords{Increasing stability, inverse boundary value problem, nonlinear Schr\"odinger equations}
\maketitle

\section{Introduction}
In this article we investigate the inverse boundary value problem for the Helmholtz equation with cubic nonlinearity
\begin{equation}\label{main_eq}
\Delta_g u+k^2u-cu^3=0,\quad\text{in }M,
\end{equation}
where $(M,g)$ is an $n$-dimensional compact oriented Riemannian manifold with smooth boundary $\partial M$ and $n\geq 3$. Here $\Delta_g$ is the Laplace-Beltrami operator, given in local coordinates by
\[
\Delta_g u=\det(g)^{-1/2}\partial_i(\det(g)^{1/2}g^{ij}\partial_ju),
\]
where the Riemannian metric $g=(g_{ij}(x))$ and $g^{-1}=(g^{ij}(x))$. We consider the inverse problems of recovering $c$ from boundary measurement, which is often modeled as the Dirichlet-to-Neumann (DtN) map.

 Much work has been done for the corresponding problems for the linear equation
\begin{equation}\label{linearequation}
\Delta_g u+k^2u-cu=0,\quad\text{in }M.
\end{equation}
When $k=0$, the problem is closely related to the Calder\'on's problem arising from Electrical Impedance Tomography (EIT). For the case where $g$ is Euclidean, the global uniqueness was first established in the seminal paper \cite{sylvester1987global}. Logarithmic stability of such problem was proved in \cite{alessandrini1988stable}, and found to be optimal in \cite{mandache2001exponential}. For the case $g$ is not Euclidean, the problem is in general still open. Much work has been done with further assumption that $(M,g)$ is conformally transversally anisotropic, i.e., $(M,g)\subset\subset(\mathbb{R}\times M_0,g)$, $g=d(e\oplus g_0)$, $(\mathbb{R},e)$ is the Euclidean line, $(M_0,g_0)$ is some compact $(n-1)$-dimensional manifold with boundary, and $d$ is some smooth positive function. With further assumption that  $(M_0,g_0)$ is simple, meaning $\partial M_0$ is convex, $M_0$ is simply connected and has no conjugate points, the uniqueness of $c$ was proved in \cite{dos2009limiting}. A double-logarithmic type stability estimate, under the same assumptions, was established in \cite{caro2014stability}. Under a more general assumption that the geodesic ray transform is injective on $(M_0,g_0)$, the uniqueness was proved in \cite{ferreira2016calderon}.

Since the work \cite{kurylev2018inverse}, much attention has been paid to inverse problems for nonlinear hyperbolic equations. For many nonlinear equations, the nonlinearity actually helps in solving the related inverse problems, whereas the problems for corresponding linear equations are still open. For recent works on inverse boundary value problems for nonlinear hyperbolic equations, we refer to \cite{chen2021detection,chen2021inverse,uhlmann2020determination,hintz2022dirichlet,uhlmann2022inverse,lassas2020uniqueness} and references therein. The inverse boundary value problem for the nonlinear elliptic equation
\begin{equation}\label{eq_no1}
\Delta_g u+cu^m=0,\quad\text{in }M
\end{equation}
$m\geq 3$,
 was considered in \cite{lassas2021inverse,feizmohammadi2020inverse}, where uniqueness was proved under very mild geometrical assumptions on $(M_0,g_0)$. For this problem, the nonlinearity was utilized in an essential way. In contrast with the study for the linear model \cite{dos2009limiting,caro2014stability,ferreira2016calderon} for which the problem is related to the geodesic ray transform on $(M_0,g_0)$, a pointwise recovery is possible with the nonlinear model \eqref{eq_no1}. The study on inverse boundary value problems for nonlinear elliptic equations goes back to \cite{isakov1995global,sun1996quasilinear,sun1997inverse}. For more recent works, we refer to \cite{assylbekov2021inverse,carstea2021calderon,krupchyk2020partial,kian2022partial,carstea2020inverse,feizmohammadi2021inverse} and the references therein.\\

In this article, we are mainly interested in the stability of the inverse boundary value problem for \eqref{main_eq}, especially its behavior when the wavenumber $k$ increases. Recently, it was observed that for many inverse problems for Helmholtz equations, the stability improves as the wavenumber grows \cite{isakov2007increased,isakov2011increasing,isakov2014increasing,BaoetalReview2015,isakov2016increasing,isakov2020linearized}. For the equation \eqref{linearequation} with $g$ to be Euclidean, the problem of recovering $c$ from linearized DtN map was considered in \cite{lu2022increasing, zlx2022submitted}. It was proved therein that the stability approaches to a H\"older type as $k$ goes to infinity. Numerical results also corroborate this behavior. When $g$ is Euclidean, similar increasing stability results have already been obtained in \cite{lu2022increasing, zlx2022submitted}. In this paper, we will consider the case where $g$ is transversally anisotropic with $(M_0,g_0)$ simple. The main technical difficulty is that the CGO solutions for the equation
\[
\Delta_gu+k^2u=0
\]
 used here are more complicated and can only be constructed asymptotically. The underlying reason is that a non-Euclidean metric $g$ causes geometrical scattering of waves. We emphasize here that, to the best of our knowledge, our result shall be the first increasing stability result on transversally anisotropic manifolds.\\

 We will consider recovering $c$ from two different linearized DtN maps, which are closely related. We will detail these two linearizations below.

 \noindent\textbf{Multilinearization.} The first linearization has been used in \cite{feizmohammadi2020inverse,lassas2021inverse}. Take $\epsilon=(\epsilon_1,\epsilon_2,\epsilon_3)$ where each small variable $\epsilon_j>0$, and consider the solution $u_\epsilon$ to \eqref{main_eq} with Dirichlet boundary value
 \[
 f_\epsilon=\epsilon_1f_1+\epsilon_2f_2+\epsilon_3f_3.
 \]
 It is clear that $w:=\partial_{\epsilon_1}\partial_{\epsilon_2}\partial_{\epsilon_3}u_\epsilon\big\vert_{\epsilon=0}$ solves the equation
 \begin{equation}\label{eq_w}
 \Delta_g w+k^2w=6cv_1v_2v_3,\quad w\vert_{\partial M}=0,
 \end{equation}
 where $v_j$, $j=1,2,3$, solves the equation
 \begin{equation}\label{eq_vj}
 \Delta_g v_j+k^2v_j=0,\quad v_j\vert_{\partial M}=f_j.
 \end{equation}
 Therefore we can define the multi-linear map
 \[
 D^3_0\Lambda_c:\left(C^2(\partial M)\right)^3\rightarrow L^{4/3}(\partial M)
 \]
 such that
 \[
 D^3_0\Lambda_c(f_1,f_2,f_3)=\partial_\nu w.
 \]
 Next, we verify the mapping properties mentioned above. Notice that for any $f_j\in C^2(\partial M)$, there exists a unique solution $v_j\in W^{2,4}(M)$ to \eqref{eq_vj} such that
 \[
 \|v_j\|_{W^{2,4}(M)}\leq \|f_j\|_{C^2(\partial M)}.
 \]
 Then $cv_1v_2v_3\in L^{4/3}(M)$ with the estimate
 \[
 \|cv_1v_2v_3\|_{L^{4/3}(M)}\leq C\|v_1\|_{L^4(M)}\|v_2\|_{L^4(M)}\|v_3\|_{L^4(M)}\leq C\|f_1\|_{C^2(\partial M)}\|f_2\|_{C^2(\partial M)}\|f_3\|_{C^2(\partial M)}.
 \]
According to the $L^p$ theory for elliptic equations, the solution $w$ to \eqref{eq_w} satisfies the estimate
 \[
 \|w\|_{W^{2,4/3}(M)}\leq C\|cv_1v_2v_3\|_{L^{4/3}(M)}\leq C\|f_1\|_{C^2(\partial M)}\|f_2\|_{C^2(\partial M)}\|f_3\|_{C^2(\partial M)},
 \]
 and thus
 \[
 \|\partial_vw\|_{L^{4/3}(\partial M)}\leq C\|cv_1v_2v_3\|_{L^{4/3}(M)}\leq C\|f_1\|_{C^2(\partial M)}\|f_2\|_{C^2(\partial M)}\|f_3\|_{C^2(\partial M)}.
 \]
 If we denote
 \[
 \epsilon=\sup_{\|f_j\|_{C^2(\partial M)}\leq 1}\|D^3_0\Lambda_c(f_1,f_2,f_3)\|_{L^{4/3}(\partial M)}=\sup_{\|f_j\|_{C^2(\partial M)}\leq 1}\|\partial_\nu w\|_{L^{4/3}(\partial M)},
 \]
 then for any $f_j\in C^2(\partial M)$, we have
 \[
 \|D^3_0\Lambda_c(f_1,f_2,f_3)\|_{L^{4/3}(\partial M)}\leq \epsilon \|f_1\|_{C^2(\partial M)}\|f_2\|_{C^2(\partial M)}\|f_3\|_{C^2(\partial M)}.
 \]

 \noindent\textbf{Linearization with respect to $c$.} This second linearization has been used in \cite{lu2022increasing}. Now, let us consider the (nonlinear) map $\Lambda'_c:f\mapsto \partial_\nu v_f\vert_{\partial M}$. Here $v_f$ solves the equation
 \[
 \begin{cases}
 (\Delta_g+k^2)v_f=cu_f^3\quad \text{in }M,\\
 v_f=0\quad \text{on }\partial M,
 \end{cases}
 \]
 where $u_f$ is the solution to
 \[
 \begin{cases}
 (\Delta_g+k^2)u_f=0\quad \text{in }M,\\
 u_f=f\quad \text{on }\partial M.
 \end{cases}
 \]
 For any $f\in C^2(\partial M)$, there is a unique solution $u_f\in W^{2,4}(M)$ such that
 \[
 \|u_f\|_{W^{2,4}(M)}\leq \|f\|_{C^2(\partial M)}.
 \]
 Then $cu_f^3\in L^{\frac{4}{3}}(M)$ with the estimate
 \[
 \|cu_f^3\|_{L^{\frac{4}{3}}(M)}\leq C\|u_f\|_{L^{4}(M)}^3\leq C\|f\|_{C^2(\partial M)}^3.
 \]
 Therefore, the solution $v_f$ satisfies the estimate
 \[
 \|v_f\|_{W^{2,\frac{4}{3}}(M)}\leq C\|f\|_{C^2(\partial M)}^3,
 \]
 and consequently
 \[
 \|\partial_\nu v_f\|_{L^{\frac{4}{3}}(\partial M)}\leq C\|f\|_{C^2(\partial M)}^3.
 \]
 Denote
 \[
 \epsilon=\sup_{\|f\|_{C^2(\partial M)}=1}\|\Lambda_c' f\|_{L^{\frac{4}{3}}(\partial M)}=\sup_{\|f\|_{C^2(\partial M)}=1}\|\partial_\nu v_f\|_{L^{\frac{4}{3}}(\partial M)}.
 \]
For any $f\in C^2(\partial M)$, we have
 \[
 \begin{cases}
 (\Delta_g+k^2)v_f=-c\|f\|_{C^2(\partial M)}^3u_{f/\|f\|_{C^2}(\partial M)}^3\quad \text{in }M,\\
 v_f=0\quad \text{on }\partial M.
 \end{cases}
 \]
 Therefore $v_f=\|f\|_{C^2(\partial M)}^3v_{f/\|f\|_{C^2(\partial M)}}$, and
 \[
 \|\Lambda_c' f\|_{L^{\frac{4}{3}}(\partial M)}\leq \epsilon\|f\|_{C^2(\partial M)}^3.
 \]

 We will consider the problem:\\

 \textbf{Recover $c(x)$ from the linearized DtN maps $D^3_0\Lambda_c$ or $\Lambda_c'$.}\\

 The rest of this article is organized as follows. In Section \ref{CGO}, we review the construction of CGO solutions and  sophisticatedly calibrate their behaviors with respect to several parameters. In Section \ref{proof}, we state and prove the main theorems.

\section{Complex Geometrical Optics solutions}\label{CGO}
In this section, we review the properties of CGO solutions on transversally anisotropic manifolds constructed in \cite{ferreira2016calderon}. The form of CGO solutions is slightly different since we have a nonzero wavenumber $k$ here. For our purposes, we need to keep track on how the solutions depend on the wavenumber $k$ and some parameters $\tau$ and $\lambda$ that will be introduced later.

Throughout this section, let $(M,g)\subset\subset (\mathbb{R}\times M_0,g)$ be a transversally anisotropic manifold with $g=e\oplus g_0$, where $e$ is the Euclidean metric on $\mathbb{R}$. Assume further that $(M_0,g_0)$ is an $(n-1)$-dimensional simple Riemannian manifold with smooth boundary $\partial M_0$. Recall that $(M_0,g_0)$ is called simple if  $\partial M_0$ is strictly convex and any two points $x,y\in M_0$ can be connected by a unique geodesic, contained in $M_0$, depending smoothly on $x$ and $y$. We will construct CGO solutions satisfying the equation
\begin{equation}
\Delta_g u+k^2u=0.
\end{equation}
Denote $x=(x_1,x')$ to be the coordinate system on $\mathbb{R}\times M_0$. Then we can write
\[
\Delta_g=\partial_1^2+\Delta_{g_0},
\]
where $\Delta_{g_0}$ is the Laplace-Beltrami operator on $(M_0,g_0)$.

 Let $\tau,\lambda$ be real numbers, where $|\tau|\geq 1$.
We will construct a family of solutions
\[
u_{\tau+\mathrm{i}\lambda}=e^{(\tau+\mathrm{i}\lambda)x_1}(\widetilde{v}_{\tau+\mathrm{i}\lambda}(x')+r(x)),
\]
 where $\widetilde{v}_{\tau+\mathrm{i}\lambda}$ is a family of functions on $M_0$, and the remainder term $r\rightarrow 0$ as $\tau\rightarrow+\infty$. We remark here that $\widetilde{v}_{\tau+\mathrm{i}\lambda}$ does not depend on $x_1$.

Notice that
\[
\begin{split}
&e^{-(\tau+\mathrm{i}\lambda)x_1}(-\Delta_g-k^2)e^{(\tau+\mathrm{i}\lambda)x_1} u\\
=&(-\partial_1^2+2(\tau+\mathrm{i}\lambda)\partial_1-\Delta_{g_0}-k^2-(\tau+\mathrm{i}\lambda)^2) u.
\end{split}
\]
From now on, we take $\tau,\lambda$ such that $k^2+\tau^2-\lambda^2\geq 1$.\\

 We first construct Gaussian beam quasimode $\widetilde{v}_{\tau+\mathrm{i}\lambda}(x')$ on $M_0$ such that $\widetilde{v}_{\tau+\mathrm{i}\lambda}(x')$ is concentrated near a geodesic in the high frequency limit.

 Assume that $\gamma:[0,L]\rightarrow M_0$ is a geodesic in $M_0$ such that $\dot{\gamma}(0)$ and $\dot{\gamma}(L)$ are nontangential vectors on $\partial M_0$ and $\gamma(t)\in M^{\mathrm{int}}$ for $0<t<L$.
Fix the values of $K$ and $m$, the solutions $\widetilde{v}_{\tau+\mathrm{i}\lambda}$ would be compactly supported in a neighborhood of $\gamma$ and satisfy the estimates
\begin{equation}\label{gaussianp1}
\|(-\Delta_{g_0}-k^2-(\tau+\mathrm{i}\lambda)^2)\widetilde{v}_{\tau+\mathrm{i}\lambda}\|_{H^m(M_0)}\leq C(k^2+\tau^2-\lambda^2)^{-\frac{K}{2}}e^{\sigma|\lambda|},
\end{equation}
and
\begin{equation}\label{gaussianp2}
\|\widetilde{v}_{\tau+\mathrm{i}\lambda}\|_{L^4(M_0)}\leq Ce^{\sigma|\lambda|},\quad \|\widetilde{v}_{\tau+\mathrm{i}\lambda}\|_{L^4(\partial M_0)}\leq Ce^{\sigma|\lambda|}
\end{equation}
for some positive constants $C$ and $\sigma$.

To simplify the notations, let $s=\sqrt{k^2+(\tau+\mathrm{i}\lambda)^2}$, and denote $\widetilde{v}_s=\widetilde{v}_{\tau+\mathrm{i}\lambda}$. Here and throughout the paper we consider $\sqrt{z}$ defined on $\{z:\Re z>0\}$ such that
$
\Re\sqrt{z}>0
$
everywhere.

 We will construct $\widetilde{v}_s$ such that
\[
(-\Delta_{g_0}-s^2)\widetilde{v}_s\sim 0.
\]
Note that $|s|\geq\sqrt{k^2+\tau^2-\lambda^2}$.
The Gaussian beam solutions are of the form
\[
\widetilde{v}_s=(\sqrt{k^2+\tau^2-\lambda^2})^{\frac{n-2}{8}}e^{\mathrm{i}s\Theta}a.
\]
The construction of above solutions will be carried out in the Fermi coordinates in a neighborhood of the geodesic $\gamma$ (cf. e.g. \cite[Lemma 3.5]{ferreira2016calderon}). Let us briefly recall that the Fermi coordinates can be constructed in the following way. First we choose $\{v_2,\cdots,v_{n-1}\}$ in $T_{\gamma(0)}M_0$ such that $\{v_1=\dot{\gamma}(0),v_2,\cdots,v_{n-1}\}$ is an orthonormal basis of $T_{\gamma(0)}M_0$. Let $E_{\alpha}(t)$ be the parallel transport of $v_\alpha$ along the geodesic $\gamma$. Then $\{\dot{\gamma}(t),E_2(t),\cdots,E_{n-1}(t)\}$ is an orthonormal basis of $T_{\gamma(t)}M_0$. Inverting the map
\[
F(t,y)=\exp_{\gamma(t)}\left(\sum_{\alpha=2}^{n-1}y^\alpha E_\alpha(t)\right)
\]
gives the Fermi coordinates $(t,y)$ near $\gamma(0,L)$ such that the geodesic $\gamma(t)$ corresponds to $\{y=0\}$. Here we have used the fact that $\gamma$ is not self-intersecting. Furthermore, under the Fermi coordinates the metric $g$ satisfies
\[
g^{jk}\vert_{\gamma(t)}=\delta^{jk},\quad\partial_ig^{jk}\vert_{\gamma(t)}=0.
\]

Let
\[
v_s=e^{\mathrm{i}s\Theta}a,
\]
where $\Theta$ and $a$ are smooth complex functions near $\gamma$ with $a$ supported in $\{|y|\leq\delta'/2\}$.
By calculation, one has
\[
(-\Delta_{g_0}-s^2)v_s=e^{\mathrm{i}s\Theta}\left(s^2[(\langle\mathrm{d}\Theta,\mathrm{d}\Theta \rangle_{g_0}-1)a]-\mathrm{i}s[2\langle\mathrm{d}\Theta,\mathrm{d}a\rangle_{g_0}+(\Delta_{g_0}\Theta)a]-\Delta_{g_0} a\right).
\]
We first choose $\Theta$ such that
\[
\langle\mathrm{d}\Theta,\mathrm{d}\Theta\rangle_{g_0}=1,\text{  up to $N$th order on $\gamma$}.
\]
This can be done by looking for $\Theta$ of the form $\Theta=\sum_{j=0}^N\Theta_j$ where $\Theta_j$ is a polynomial of degree $j$ in $y$. In particular, we can choose $\Theta_0(t,y)=t$, $\Theta_1(t,y)=0$. For the construction of $\Theta_2$, one can write
\[
\Theta_2(t,y)=\frac{1}{2} y\cdot H(t)y,
\]
where $H(t)$ is a smooth complex symmetric matrix solving some matrix Riccati equation (cf. \cite[pp.2595]{ferreira2016calderon}). Here $\cdot$ refers to the usual $\mathbb{R}^{n-2}$ inner product and $y\in\mathbb{R}^{n-2}$. In fact, one can choose $H(t)$ such that $\Im (H(t))$ is positive definite. This completes the construction of $\Theta_2$ and one can then successively construct $\Theta_3,\cdots, \Theta_N$ by solving additional ODEs. Then, the phase function $\Theta$ satisfies the following properties.
\[
\begin{split}
&\Theta(\gamma(t))=t,\quad \nabla \Theta(\gamma(t))=\dot{\gamma}(t),\\
&\Im(D^2\Theta(\gamma(t)))\geq 0, \quad \Im(D^2\Theta)(\gamma(t))(X,X)>0, \quad X\perp \dot{\gamma}(t),\, X\neq 0.
\end{split}
\]
The above properties of the phase function imply that the solution $v_s$ is exponentially decaying away from the geodesic $\gamma$.

Next we need to construct the amplitude $a$ such that
\[
-\mathrm{i}s[2\langle\mathrm{d}\Theta,\mathrm{d}a\rangle_{g_0}+(\Delta_{g_0}\Theta)a]-\Delta_{g_0} a=0\text{  up to $N$th order on $\gamma$}.
\]
We assume that $a$ is of the asymptotic form
\[
a=\chi(y/\delta')(a_0+s^{-1}a_1+s^{-2}a_1+\cdots+s^{-N}a_N)
\]
 with $a_0(\gamma(t))$ non-vanishing and $\chi$ is a smooth function with $\chi=1$ for $|y|\leq 1/4$ and $\chi=0$ for $|y|\geq 1/2$. Notice that the support of $a$ can be taken to be in an arbitrary neighborhood of $\gamma$ by choosing appropriate $\delta'$ small enough. Here $a_0,a_1,\cdots, a_N$ are independent of $s$ (thus independent of $\tau,\lambda,k$). It suffices to determine $a_j$ such that
 \[
 \begin{split}
 2\langle\mathrm{d}\Theta,\mathrm{d}a_0\rangle_{g_0} +(\Delta_{g_0}\Theta)a_0&=0\text{  up to $N$th order on $\gamma$},\\
  2\langle\mathrm{d}\Theta,\mathrm{d}a_j\rangle_{g_0} +(\Delta_{g_0}\Theta)a_j-\mathrm{i}\Delta_{g_0} a_{j-1}&=0\text{  up to $N$th order on $\gamma$ for }j=1,\cdots, N.
 \end{split}
 \]

We seek for $a_k$, $k=1,\cdots, N$, of the form
\[
a_k=\sum_{j=0}^N a_{k,j}(t,y),
\]
where $a_{k,j}$ is a complex homogeneous polynomial of order $j$ in $y$.
In particular $a_{0,0}$ satisfies the equation
\[
\partial_ta_{0,0}(t,0)+\frac{1}{2}\Delta_{g_0}\Theta(t,0)=0
\]
on the geodesic $\{y=0\}$.
Note that
\[
\Delta_{g_0}\Theta(t,0)=\mathrm{tr}(H(t)).
\]
We can take
\[
a_{0,0}(t,0)=c_0e^{-\frac{1}{2}\int_{t_0}^t\mathrm{tr}H(t')\mathrm{d}t'}.
\]
The details can be found in \cite[Proposition 3.1]{ferreira2016calderon}, and are omitted here.\\

To summarize, we have constructed a function $v_s=e^{\mathrm{i}s\Theta}a$ in a neighborhood of $\gamma$ where
\[
\begin{split}
\Theta(t,y)&=t+\frac{1}{2}y\cdot H(t)y+\widetilde{\Theta},\\
a(t,y)&=a_0+s^{-1}a_1+\cdots +s^{-N}a_N\chi(y/\delta'),\\
a_0(t,0)&=c_0e^{-\frac{1}{2}\int_0^t\mathrm{tr} H(t')\mathrm{d}t'}.
\end{split}
\]
Here $\widetilde{\Theta}=\mathcal{O}(|y|^3)$, and both $\Theta$ and each $a_j$ is independent of $s$.\\

Next, we derive a lower bound for $\Re s$ and an upper bound for $|\Im s|$.
\begin{lemma}\label{est_s}
Assume that $\tau^2+k^2-\lambda^2\geq 1$ and $\tau\geq 1$. For $s=\sqrt{k^2+(\tau+\mathrm{i}\lambda)^2}$, we have
\[
\Re s\geq c_0\sqrt{k^2+\tau^2-\lambda^2},
\]
\[
|\Im s|\leq \sqrt{5}|\lambda|.
\]
for some positive constant $c_0$.
\end{lemma}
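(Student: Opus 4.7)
The plan is to compute $s$ explicitly from the standard square root formula for a complex number and read off both bounds from the resulting expression. Set $A := k^2+\tau^2-\lambda^2$ and $B := 2\tau\lambda$, so that $s^2 = A+\mathrm{i}B$, with the hypothesis giving $A\geq 1$. Writing $s = a+\mathrm{i}b$ with $a=\Re s>0$, the relations $a^2-b^2=A$ and $a^2+b^2=\sqrt{A^2+B^2}$ yield the explicit formulas
\[
a^2 = \frac{A+\sqrt{A^2+B^2}}{2},\qquad b^2 = \frac{\sqrt{A^2+B^2}-A}{2}.
\]

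For the first estimate, I simply bound $\sqrt{A^2+B^2}\geq A$, which gives $a^2\geq A$ and hence $\Re s \geq \sqrt{k^2+\tau^2-\lambda^2}$, so that $c_0=1$ works (any smaller constant is fine if a conservative statement is preferred).

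For the second estimate, I need to show $\sqrt{A^2+B^2}-A\leq 10\lambda^2$, which after rearranging and squaring is equivalent to
\[
B^2 \;\leq\; 20A\lambda^2 + 100\lambda^4.
\]
Substituting $B^2=4\tau^2\lambda^2$ and $A=k^2+\tau^2-\lambda^2$, this reduces, after cancelling a factor of $\lambda^2$ (the case $\lambda=0$ being trivial), to
\[
4\tau^2 \;\leq\; 20k^2+20\tau^2+80\lambda^2,
\]
which is manifestly true. This gives $|\Im s|^2 \leq 5\lambda^2$ as required.

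The argument is essentially a direct computation; there is no real obstacle. The only place where one has to be a little careful is choosing the correct branch of the square root so that $\Re s>0$, which is consistent with the convention $\Re\sqrt{z}>0$ on $\{\Re z>0\}$ fixed earlier in the section (and $\Re(s^2)=A\geq 1>0$ ensures we are indeed in this half plane).
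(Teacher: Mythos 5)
Your proof is correct, and it takes a genuinely different (and more elementary) route than the paper. The paper obtains the lower bound on $\Re s$ by factoring out $\sqrt{k^2+\tau^2-\lambda^2}$ and setting $c_0=\inf_{t\in\mathbb{R}}\Re\sqrt{1+\mathrm{i}t}$, whose positivity it asserts without evaluation; and it obtains the bound on $|\Im s|$ by comparing $s$ with the real number $\sqrt{k^2+\tau^2}$ via the inequality $|\sqrt{1+z}-1|\leq|z|$, then estimating $|-\lambda^2+2\mathrm{i}\tau\lambda|/\sqrt{k^2+\tau^2}$ using $\lambda^2\leq k^2+\tau^2$. You instead write down the exact formulas $a^2=\tfrac{1}{2}\bigl(A+\sqrt{A^2+B^2}\bigr)$ and $b^2=\tfrac{1}{2}\bigl(\sqrt{A^2+B^2}-A\bigr)$ and verify both bounds by direct algebra. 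What your approach buys: the explicit constant $c_0=1$ (which, incidentally, is also the actual value of the paper's infimum, since $\Re\sqrt{1+\mathrm{i}t}=\sqrt{(1+\sqrt{1+t^2})/2}\geq 1$), and a second estimate that needs neither $\tau\geq 1$ nor the full strength of $\lambda^2\leq k^2+\tau^2$ — only $A\geq 0$ to justify the squaring step. What the paper's approach buys is a reusable comparison of $s$ with $\sqrt{k^2+\tau^2}$, which reflects the way $s$ is treated as a perturbation of that real frequency elsewhere in the argument. Your handling of the branch (noting $\Re(s^2)=A\geq 1>0$ places $s^2$ in the half-plane where the convention $\Re\sqrt{z}>0$ applies) is exactly right. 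All the algebra checks out: $B^2=4\tau^2\lambda^2\leq 20A\lambda^2+100\lambda^4$ reduces to $4\tau^2\leq 20k^2+20\tau^2+80\lambda^2$ after cancelling $\lambda^2$, which is trivially true.
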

\begin{proof}
For the first estimate, notice that
\[
\Re s=\Re\sqrt{k^2+\tau^2-\lambda^2+2\mathrm{i}\tau\lambda}=\sqrt{k^2+\tau^2-\lambda^2}\Re\sqrt{1+\frac{2\mathrm{i}\tau\lambda}{k^2+\tau^2-\lambda^2}}\geq c_0\sqrt{k^2+\tau^2-\lambda^2},
\]
where
\[
c_0=\inf_{t\in\mathbb{R}}\Re(\sqrt{1+\mathrm{i}t})>0.
\]

Now, note that
\[
\sqrt{k^2+\tau^2-\lambda^2+2\mathrm{i}\tau\lambda}-\sqrt{k^2+\tau^2}=\sqrt{k^2+\tau^2}\left(\sqrt{1+\frac{-\lambda^2+2\mathrm{i}\tau\lambda}{k^2+\tau^2}}-1\right).
\]
Using the fact
\[
|\sqrt{1+z}-1|=\left|\frac{z}{\sqrt{1+z}+1}\right|\leq |z|
\]
 (notice that $\Re\sqrt{1+z}>0$), we obtain
\[
\left|\sqrt{k^2+\tau^2-\lambda^2+2\mathrm{i}\tau\lambda}-\sqrt{k^2+\tau^2}\right|\leq \frac{|-\lambda^2+2\mathrm{i}\tau\lambda|}{\sqrt{k^2+\tau^2}}.
\]
For $\tau^2+k^2\geq \lambda^2$ and $\tau\geq 1$, we have
\[
\frac{|-\lambda^2+2\mathrm{i}\tau\lambda|}{\sqrt{k^2+\tau^2}}= \sqrt{\frac{\lambda^4+4\tau^2\lambda^2}{k^2+\tau^2}}\leq \sqrt{\frac{\lambda^2(k^2+\tau^2)+4(\tau^2+k^2)\lambda^2}{k^2+\tau^2}}=\sqrt{5}|\lambda|.
\]
 In particular,
\[
|\Im s|\leq \left|\sqrt{k^2+\tau^2-\lambda^2+2\mathrm{i}\tau\lambda}-\sqrt{k^2+\tau^2}\right|\leq\sqrt{5}|\lambda|.
\]
\end{proof}

Now let
\[
\widetilde{v}_{\tau+\mathrm{i}\lambda}=\widetilde{v}_s=(\sqrt{k^2+\tau^2-\lambda})^{\frac{n-2}{8}}v_s,
\]
and
\[
f=(-\Delta_{g_0}-s^2)\widetilde{v}_{\tau+\mathrm{i}\lambda}.
\]
By \cite[Proposition 3.1]{ferreira2016calderon}, we have that $f$ is of the form
\[
\begin{split}
f=&(\sqrt{k^2+\tau^2-\lambda^2})^{\frac{n-2}{8}}e^{\mathrm{i}s\Theta}(s^2h_2a_0+sh_1+\cdots s^{-(N-1)})h_{-(N-1)}-s^{-N}\Delta a_{-N})\chi(y/\delta')\\
&+(\sqrt{k^2+\tau^2-\lambda^2})^{\frac{n-2}{8}}e^{\mathrm{i}s\Theta}sb\widetilde{\chi}(y/\delta')
\end{split}
\]
where $h_j=0$ up to order $N$ on $\gamma$, $b$ vanishes near $\gamma$, and $\widetilde{\chi}$ is a smooth function with $\tilde{\chi}=0$ for $|y|\geq 1/2$.

Now notice that, using Lemma \ref{est_s},
\begin{equation}\label{normPhi}
\vert e^{\mathrm{i}s\Theta}\vert\leq e^{-(\Im s)(\Re\Theta)}e^{-(\Re s)(\Im\Theta)}\leq e^{\sigma|\lambda|}e^{-c\sqrt{k^2+\tau^2-\lambda^2}|y|^2}
\end{equation}
for some constant $\sigma>0$.
 Therefore, if we take $\delta'$ small enough,
\begin{equation}\label{asymp_f}
|f|\lesssim (\sqrt{k^2+\tau^2-\lambda^2})^{\frac{n-2}{8}}e^{\sigma\lambda}e^{-c\sqrt{k^2+\tau^2-\lambda^2}|y|^2}(|s|^2|y|^{N+1}+|s|^{-N}+|s|\mathcal{O}(|y|^{\infty}))
\end{equation}
in a neighborhood of $\gamma$.

\begin{remark}
	In above, we can take $\sigma=\sqrt{5}\sup|\Re\Theta|=\sqrt{5}\mathrm{diam}_{g_0}(M_0)$, where $\mathrm{diam}_{g_0}(M_0)$ is the diameter of $M_0$ w.r.t. the metric $g_0$ \textnormal{(}i.e., the supreme of lengths of all the geodesics in $(M_0,g_0)$\textnormal{)}.
\end{remark}
Also, we have
\[
|\widetilde{v}_s|\lesssim (\sqrt{\tau^2+k^2-\lambda^2})^{\frac{n-2}{8}} e^{\sigma|\lambda|}e^{-c\sqrt{\tau^2+k^2-\lambda^2}|y|^2}\chi(y/\delta').
\]
Then, we obtain
\[
\begin{split}
\int_{M_0}|\widetilde{v}_s|^4\mathrm{d}V_{g_0}\lesssim & (\sqrt{\tau^2+k^2-\lambda^2})^{\frac{n-2}{2}}e^{4\sigma|\lambda|}\int_0^\infty r^{n-3} e^{-c\sqrt{\tau^2+k^2-\lambda^2}r^2}\mathrm{d}r\\
\lesssim &e^{4\sigma|\lambda|}\int_0^\infty\rho^{n-3}e^{-c\rho^2}\mathrm{d}\rho\\
\lesssim &e^{4\sigma|\lambda|},
\end{split}
\]
and consequently
\[
\|\widetilde{v}_s\|_{L^4(M_0)}\lesssim  e^{\sigma|\lambda|}.
\]

By taking $N$ large enough, we have
\[
\begin{split}
\|f\|_{L^2(M_0)}=&\|(-\Delta_{g_0}-s^2)\widetilde{v}_s\|_{L^2(M_0)}\\
 \lesssim&\|(\sqrt{\tau^2+k^2-\lambda^2})^{\frac{n-2}{8}}e^{\sigma|\lambda|}e^{-c\sqrt{\tau^2+k^2-\lambda^2}|y|^2}(|s|^2|y|^{N+1}+|s|^{-N})\|_{L^2( M_0)}\\
 \lesssim& (\sqrt{\tau^2+k^2-\lambda^2})^{-R}e^{\sigma|\lambda|}.
\end{split}
\]
Here and throughout the paper, $\sigma$ is a general constant, which may vary from step to step.
In above we have used the fact that
\[
\begin{split}
|s|^4e^{\sigma\lambda}\leq & (k^2+\tau^2-\lambda^2)^2e^{\sigma|\lambda|}+\tau^2\lambda^2e^{\sigma|\lambda|}\\
\leq &(k^2+\tau^2-\lambda^2)^2e^{\sigma|\lambda|} +(\tau^2+\lambda^2+k^2)^2e^{\sigma|\lambda|}\\
\lesssim& (k^2+\tau^2-\lambda^2)^2e^{\sigma|\lambda|}+\lambda^4e^{\sigma|\lambda|}\\
\lesssim &(k^2+\tau^2-\lambda^2)^2e^{\sigma'|\lambda|}
\end{split}
\]
with $\sigma'>\sigma$ for $k^2+\tau^2-\lambda^2\geq 1$. For taking $k$ derivatives of $f$ brings at most $k$ powers of $s$ to the front in \eqref{asymp_f}, thus, we can take $N=N(K,m)$ large enough such that
\[
\|f\|_{H^m(M_0)}=\|(-\Delta_{g_0}-s^2)\widetilde{v}_s\|_{H^m(M_0)}=\mathcal{O}((k^2+\tau^2-\lambda^2)^{-K/2}e^{\sigma'|\lambda|}).
\]
~\\

Next, we construct the remainder term $r$ for the CGO solutions.
Let $r=r(x_1,x')$ solve the equation
\[
(-\partial_1^2+2(\tau+\mathrm{i}\lambda)\partial_1-\Delta_{g_0}-k^2-(\tau+\mathrm{i}\lambda)^2)r(x_1,x')=(\Delta_{g_0}+k^2+(\tau+\mathrm{i}\lambda)^2)\widetilde{v}_{\tau+\mathrm{i}\lambda},
\]
which can be rewritten as
\[
(-\partial_1^2+2(\tau+\mathrm{i}\lambda)\partial_1-\Delta_{g_0}-s^2)r=(\Delta_{g_0}+s^2)\widetilde{v}_{\tau+\mathrm{i}\lambda}.
\]
The above equation is solvable by the following lemma, and the solution $r$ satisfies the estimate
\[
\|r\|_{H^m(M)}\leq C\|(\Delta_{g_0}+s^2)\widetilde{v}_{\tau+\mathrm{i}\lambda}\|_{H^m(M_0)}=C\|f\|_{H^m(M_0)}\leq C(k^2+\tau^2-\lambda^2)^{-K/2}e^{\sigma|\lambda|}.
\]
\begin{lemma}\label{lemma_cylinder}
Consider the equation
\begin{equation}\label{eq_uf}
(-\partial_1^2+2(\tau+\mathrm{i}\lambda)\partial_1-\Delta_{g_0}-k^2-(\tau+\mathrm{i}\lambda)^2)r=f,\quad \text{in }\mathcal{N}=(0,T)\times M_0.
\end{equation}
There is a constant $C$ such that for any $|\tau|\geq 1$, the above equation has a solution $r$ satisfying
\[
\|r\|_{H^{m}(\mathcal{N})}\leq \frac{C}{\tau}\|f\|_{H^m(\mathcal{N})}.
\]
If $\tau^2$ is not a Dirichlet eigenvalue of $-\Delta_{g_0}-k^2$ on $(M_0,g_0)$, then the solution is unique.
\end{lemma}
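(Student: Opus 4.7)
My plan is to derive an $L^2$ Carleman-type estimate for the formal adjoint of the operator in \eqref{eq_uf} and then invoke a Hahn--Banach duality argument to produce the solution, finally upgrading to $H^m$ by commuting derivatives with $P$. Denote the operator in question by $P := -\partial_1^2 + 2(\tau+\mathrm{i}\lambda)\partial_1 - \Delta_{g_0} - k^2 - (\tau+\mathrm{i}\lambda)^2$, whose formal $L^2(\mathcal{N})$-adjoint is $P^* = -\partial_1^2 - 2(\tau-\mathrm{i}\lambda)\partial_1 - \Delta_{g_0} - k^2 - (\tau-\mathrm{i}\lambda)^2$. Writing $P = A + B$ with
\[
A := -\partial_1^2 + 2\mathrm{i}\lambda\partial_1 - \Delta_{g_0} - k^2 - \tau^2 + \lambda^2, \qquad B := 2\tau\partial_1 - 2\mathrm{i}\tau\lambda,
\]
one verifies that $A$ is formally self-adjoint and $B$ is formally skew-adjoint on $L^2(\mathcal{N})$; crucially, since $\Delta_{g_0}$ involves only $x'$ while $\partial_1$ involves only $x_1$, the commutator $[A,B]$ vanishes identically.

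For $u \in C_c^\infty(\mathcal{N}^{\mathrm{int}})$, all boundary contributions in the relevant integrations by parts disappear, and the standard Carleman identity yields
\[
\|P^* u\|_{L^2(\mathcal{N})}^2 = \|Au\|_{L^2(\mathcal{N})}^2 + \|Bu\|_{L^2(\mathcal{N})}^2 \geq \|Bu\|_{L^2(\mathcal{N})}^2 = 4\tau^2\|(\partial_1 - \mathrm{i}\lambda) u\|_{L^2(\mathcal{N})}^2.
\]
I then apply the unitary gauge $v := e^{-\mathrm{i}\lambda x_1} u$, which preserves the $L^2$ norm and sends $(\partial_1 - \mathrm{i}\lambda)u$ to $e^{\mathrm{i}\lambda x_1}\partial_1 v$; combined with the one-dimensional Poincar\'e inequality in $x_1$ (legitimate because $v$ vanishes at $x_1 = 0$ and $x_1 = T$), this produces
\[
\tau\|u\|_{L^2(\mathcal{N})} \leq \frac{T}{2}\|P^* u\|_{L^2(\mathcal{N})} \qquad (u \in C_c^\infty(\mathcal{N}^{\mathrm{int}})).
\]

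Existence of $r \in L^2(\mathcal{N})$ with $Pr = f$ and $\|r\|_{L^2} \leq (C/\tau)\|f\|_{L^2}$ follows from the textbook Hahn--Banach / Riesz representation argument: the anti-linear functional $P^* u \mapsto \langle f, u\rangle_{L^2(\mathcal{N})}$ on $P^*(C_c^\infty(\mathcal{N}^{\mathrm{int}})) \subset L^2(\mathcal{N})$ is well-defined by the injectivity of $P^*$ (immediate from the Carleman estimate), is bounded with norm at most $(T/2\tau)\|f\|_{L^2}$, extends by Hahn--Banach to a bounded functional on $L^2(\mathcal{N})$, and is represented by the desired $r$. The identity $\langle r, P^* u\rangle = \langle f, u\rangle$ for all test functions $u$ says exactly that $Pr = f$ distributionally on $\mathcal{N}^{\mathrm{int}}$.

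To pass from $L^2$ to $H^m$, I exploit that the product structure $g = e \oplus g_0$ makes $P$ have coefficients independent of $x_1$, so $[\partial_1, P] = 0$; the same Carleman / duality argument applied to commuted data then controls $\partial_1^j r$ for $j \leq m$, and tangential regularity in $x'$ is recovered by rewriting $-\Delta_{g_0} r = f + \partial_1^2 r - 2(\tau+\mathrm{i}\lambda)\partial_1 r + (k^2 + (\tau+\mathrm{i}\lambda)^2)r$ and iteratively applying interior elliptic regularity for $-\Delta_{g_0}$ on $M_0$. For the uniqueness claim, any homogeneous solution $w$ of $Pw = 0$ can be expanded in the Dirichlet eigenbasis of $-\Delta_{g_0}$, reducing to constant-coefficient ODEs in $x_1$ with characteristic roots $(\tau+\mathrm{i}\lambda) \pm \sqrt{\mu_j - k^2}$; these degenerate precisely when $\tau^2 = \mu_j - k^2$ for some Dirichlet eigenvalue $\mu_j$, so under the excluded condition the solution produced above is forced to be unique. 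The principal technical point I expect is the careful verification that $[A,B] = 0$ and that no boundary terms survive in the Carleman identity; once that is in hand, the duality construction and the regularity bootstrap are essentially routine.
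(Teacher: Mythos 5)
Your $L^{2}$ existence argument is correct and takes a genuinely different route from the paper. The paper separates variables in the Dirichlet eigenbasis of $-\Delta_{g_0}$, factors each resulting ODE as $-\bigl(\partial_1-\tau-\mathrm{i}\lambda+\sqrt{\omega_j^2-k^2}\bigr)\bigl(\partial_1-\tau-\mathrm{i}\lambda-\sqrt{\omega_j^2-k^2}\bigr)$, and applies the explicit one-dimensional solution operators $S_z$ of Lemma \ref{lemma_ode}: one factor gains $1/\tau$ because its $z$ has $|\Re z|\geq\tau$, the other contributes a uniform constant because $\Re z\neq 0$. Your splitting $P=A+B$ into self-adjoint and skew-adjoint parts with $[A,B]=0$, the identity $\|P^{*}u\|^{2}=\|Au\|^{2}+\|Bu\|^{2}$ for $u\in C_c^\infty(\mathcal{N}^{\mathrm{int}})$, the gauge $v=e^{-\mathrm{i}\lambda x_1}u$ combined with Poincar\'e in $x_1$, and the Hahn--Banach/Riesz duality are all sound and do yield some $r\in L^{2}(\mathcal{N})$ with $Pr=f$ and $\|r\|_{L^{2}}\leq (C/|\tau|)\|f\|_{L^{2}}$. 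What the paper's more explicit construction buys is a \emph{canonical} solution operator (a Fourier multiplier in $x_1$ composed with the spectral decomposition in $x'$) that commutes with $\partial_1$ and with powers of $\omega_j$; that is exactly what makes the $H^{m}$ bound immediate there.

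This is where your proposal has a genuine gap: the step you call ``essentially routine'' is the one that fails. The duality construction produces \emph{some} solution, not a canonical one, and uniqueness is precisely what the lemma does not guarantee in general (it can fail when $\tau^{2}$ is a Dirichlet eigenvalue of $-\Delta_{g_0}-k^{2}$). Applying the same duality argument to the commuted data $\partial_1 f$ yields some $\tilde r$ with $P\tilde r=\partial_1 f$ and the right bound, but $\partial_1 r-\tilde r$ is only known to solve the homogeneous equation distributionally, so you cannot identify $\partial_1 r$ with $\tilde r$ and you obtain no estimate on $\partial_1 r$ itself. Likewise, the tangential bootstrap via $-\Delta_{g_0}r=f+\partial_1^{2}r-\cdots$ invokes \emph{interior} elliptic regularity, which controls $r$ only on compact subsets of $M_0^{\mathrm{int}}$, whereas $\|r\|_{H^{m}(\mathcal{N})}$ is a norm up to the lateral boundary $(0,T)\times\partial M_0$; your $r$ carries no boundary condition there, so no up-to-the-boundary elliptic estimate is available. (A smaller point: on the bounded interval $(0,T)$ the homogeneous ODEs always admit the nontrivial solutions $e^{\rho_{\pm}x_1}$, so your uniqueness conclusion, like the paper's, is really a statement within the class of solutions obtained by the construction rather than for arbitrary solutions on $\mathcal{N}$.) To close the gap you would need either to make the duality solution canonical in a way compatible with $\partial_1$ and with the Dirichlet spectral calculus, or simply to construct $r$ mode by mode as the paper does, in which case the commutation with $\partial_1^{s}$ and multiplication by $\omega_j^{s}$ give the $H^{m}$ estimate directly.
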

For the proof of Lemma \ref{lemma_cylinder}, we need the following lemma. Denote $L^2_\delta(\mathbb{R})$ be the space defined via the norm $\|f\|_{L^2_\delta}:=\|\langle x\rangle^\delta f\|_{L^2(\mathbb{R})}$.

\begin{lemma}\label{lemma_ode}
Let $z$ be a complex number with $\Re z\neq 0$, and consider the equation
\begin{equation}\label{ODE_Sa}
u'-z u=f\quad \text{in }\mathbb{R}.
\end{equation}
For any $f\in\mathscr{S}'(\mathbb{R})$ there is a unique solution $u\in\mathscr{S}'(\mathbb{R})$. Writing $S_z f:=u$, we have the mapping properties
\[
S_z:L^2_\delta(\mathbb{R})\rightarrow L^2_\delta(\mathbb{R})\quad \text{for all }\delta\in\mathbb{R},
\]
and
\[
\begin{split}
&\|S_z f\|_{L^2_\delta}\leq \frac{C_\delta}{|\Re z|} \|f\|_{L^2_\delta},\quad \text{if }|\Re z|\geq 1\\
&\|S_z f\|_{L^2_{-\delta}}\leq C_\delta  \|f\|_{L^2_\delta},\quad\text{if }\Re z\neq 0\text{ and }\delta>1/2.
\end{split}
\]
\end{lemma}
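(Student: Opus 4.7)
The plan is to construct $S_z$ explicitly via an integrating-factor formula, then derive the two bounds using Schur's test (for the first, producing the $1/|\Re z|$ gain) and a weighted Cauchy-Schwarz argument (for the second, uniform in $z$). For existence, the identity $(e^{-zx}u)'=e^{-zx}f$ invites integrating from $+\infty$ when $\Re z>0$ and from $-\infty$ when $\Re z<0$, yielding
\[
S_zf(x)=-\int_x^\infty e^{z(x-t)}f(t)\,\mathrm{d}t\quad\text{or}\quad S_zf(x)=\int_{-\infty}^x e^{z(x-t)}f(t)\,\mathrm{d}t,
\]
whose kernel $K(x,t)$ satisfies $|K(x,t)|=e^{-|\Re z||x-t|}$ on the appropriate half-line and vanishes elsewhere; in particular $|K|\leq 1$. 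For arbitrary $f\in\mathscr{S}'(\mathbb{R})$, the same operator is defined by the Fourier multiplier $1/(\mathrm{i}\xi-z)$, which is smooth and bounded since $|\mathrm{i}\xi-z|\geq|\Re z|>0$. Uniqueness in $\mathscr{S}'$ follows from applying the Fourier transform to the homogeneous equation: $(\mathrm{i}\xi-z)\hat{u}=0$ forces $\hat{u}=0$.

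For the first estimate with $|\Re z|\geq 1$, I would apply Peetre's inequality $\langle x\rangle^\delta\leq C_\delta\langle t\rangle^\delta\langle x-t\rangle^{|\delta|}$ so that the weighted kernel $\langle x\rangle^\delta|K(x,t)|\langle t\rangle^{-\delta}$ is dominated by $C_\delta\langle x-t\rangle^{|\delta|}e^{-|\Re z||x-t|}$. Schur's test with test functions identically equal to $1$ reduces the operator norm estimate to bounding $\int_0^\infty\langle s\rangle^{|\delta|}e^{-|\Re z|s}\,\mathrm{d}s$. The substitution $s\mapsto s/|\Re z|$, together with $|\Re z|\geq 1$ to absorb the resulting weight factor, controls this integral by $C_\delta/|\Re z|$, yielding the claimed inequality.

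For the second estimate, I would replace the $|\Re z|^{-1}$ gain by integrability of $\langle\cdot\rangle^{-2\delta}$. Apply Cauchy-Schwarz to the kernel representation in the form
\[
|S_zf(x)|^2\leq\Bigl(\int|K(x,t)|\langle t\rangle^{-2\delta}\,\mathrm{d}t\Bigr)\Bigl(\int|K(x,t)|\langle t\rangle^{2\delta}|f(t)|^2\,\mathrm{d}t\Bigr).
\]
Using $|K|\leq 1$ and $\delta>1/2$, the first factor is uniformly bounded by $\int\langle t\rangle^{-2\delta}\,\mathrm{d}t\leq C_\delta$. Multiplying by $\langle x\rangle^{-2\delta}$, integrating in $x$, and swapping the order of integration via Fubini with the bound $\int\langle x\rangle^{-2\delta}\,\mathrm{d}x\leq C_\delta$ yields $\|S_zf\|_{L^2_{-\delta}}^2\leq C_\delta\|f\|_{L^2_\delta}^2$, uniformly in $z$. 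The delicate step, and what I expect to be the main trap, is the choice of Cauchy-Schwarz splitting in this last estimate: distributing the weight $\langle t\rangle^\delta$ \emph{inside} the kernel pairing, rather than extracting it afterwards and relying on the naive bound $\int|K|\,\mathrm{d}t\sim 1/|\Re z|$, is precisely what decouples the inequality from $|\Re z|^{-1}$ and makes it uniform as $\Re z\to 0$.
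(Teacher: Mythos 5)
Your proposal is correct, but it takes a genuinely different route from the paper on the first estimate and a mildly different one on the second. The paper proves everything on the Fourier side: it writes $S_z$ as the multiplier $p(\xi)=1/(\mathrm{i}\xi-z)$, checks $|p^{(\ell)}(\xi)|\leq \ell!\,|\Re z|^{-\ell-1}$, and then uses the identity $\|v\|_{L^2_\delta}=\|\hat v\|_{H^\delta}$ together with the fact that multiplication by a $W^{\ell,\infty}$ function with $\ell\geq|\delta|$ is bounded on $H^\delta$, so the $|\Re z|^{-1}$ gain comes from the sup of $p$ and its derivatives. You instead work in physical space with the explicit kernel, combining Peetre's inequality with Schur's test and extracting the $|\Re z|^{-1}$ from $\int_0^\infty\langle s\rangle^{|\delta|}e^{-|\Re z|s}\,\mathrm{d}s$; this is equally valid for all real $\delta$ and arguably more self-contained, at the cost of having to note that the kernel formula and the $\mathscr{S}'$ multiplier definition agree (they do: the kernel is the inverse Fourier transform of $p$). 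For the second estimate the paper factors the bound through $L^2_\delta\hookrightarrow L^1\xrightarrow{S_z}L^\infty\hookrightarrow L^2_{-\delta}$, using $|e^{-z(t-x)}|\leq 1$ on the relevant half-line and Cauchy--Schwarz twice; your direct weighted Cauchy--Schwarz on the kernel is essentially the same computation written without the intermediate $L^1$ and $L^\infty$ spaces, and your closing remark about keeping the weight inside the pairing is exactly the point that both versions exploit. No gaps.
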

\begin{proof}
The proof is a minor modification of that of \cite[Proposition 4.4]{salo2013calderon}.

Taking the Fourier transforms of both sides of \eqref{ODE_Sa}, we have
\[
(\mathrm{i}\xi-\Re z-\mathrm{i}\Im z)\hat{u}=\hat{f}.
\]
Thus
\[
\hat{u}(\xi)=\mathscr{F}^{-1}\left\{\frac{1}{\mathrm{i}\xi-\Re z-\mathrm{i}\Im z}\hat{f}(\xi)\right\}.
\]
Denote $p(\xi)=\frac{1}{\mathrm{i}\xi-\Re z-\mathrm{i}\Im z}$, one can verify that
\[
\vert p^{\slu{(\ell)}}(\xi)\vert\leq \slu{\ell}!|\Re z|^{-\slu{\ell}-1},\quad \slu{\ell}=0,1,2,\cdots.
\]
Therefore $S_z:\mathscr{S}'(\mathbb{R})\rightarrow \mathscr{S}'(\mathbb{R})$, where $S_z f=\mathscr{F}^{-1}(p(\xi)\hat{f}(\xi))$, is continuous.

Let $f\in L^2_\delta(\mathbb{R})$, $\delta\in\mathbb{R}$. Notice that
\[
\|\hat{v}\|_{H^\delta}=\|\langle\,\cdot\,\rangle^\delta v\|_{L^2(\mathbb{R})}=\|v\|_{L^2_\delta}.
\]
Here $H^{\delta}=H^{\delta}(\mathbb{R})$ is the usual Sobolev space.
Fix $\slu{\ell}\geq |\delta|$,  for $|\Re z|\geq 1$, we have
\[
\|S_z f\|_{L^2_\delta}=\|p\hat{f}\|_{H^\delta}\leq C_\delta\|p\|_{W^{\slu{\ell},\infty}(\mathbb{R})}\|\hat{f}\|_{H^\delta}=C_\delta|\Re z|^{-1}\|f\|_{L^2_\delta}.
\]

Let $f\in L^1(\mathbb{R})$, and, without loss of generality, let $\Re z>0$. Applying the method of integrating factors to the ODE \eqref{ODE_Sa}, we have
\[
(ue^{-z t})'=fe^{-z t}.
\]
Therefore we have
\[
u(x)=-\int_{x}^\infty f(t)e^{-z(t-x)}\mathrm{d}t.
\]
Since $\Re z>0$, we have $|e^{-z(t-x)}|\leq 1$ for $t\geq x.$ Therefore
\[
\|u\|_{L^\infty(\mathbb{R})}\leq \|f\|_{L^1(\mathbb{R})}.
\]

Now let $f\in L_\delta^2(\mathbb{R})$ with $\delta>1/2$. We have
\[
\begin{split}
\|S_z f\|_{L^2_{-\delta}}=&\left(\int\langle t\rangle^{-2\delta}|S_z f(t)|^2\mathrm{d}t\right)^{1/2}\\
\leq &\left(\int\langle t\rangle^{-2\delta}\mathrm{d}t\right)^{1/2}\|S_z f\|_{L^\infty(\mathbb{R})}\\
\leq &c_\delta\|f\|_{L^1(\mathbb{R})}\\
=&c_\delta\int \langle t\rangle^{-\delta}\langle t\rangle^{\delta}|f(t)|\mathrm{d}t\\
\leq &c_\delta^2\|f\|_{L^2_\delta}.
\end{split}
\]
\end{proof}
\begin{proof}[Proof of Lemma \ref{lemma_cylinder}]
Let $0<\omega_1^2\leq \omega_2^2\leq\cdots$ be the eigenvalues of the Dirichlet Laplacian $-\Delta_{g_0}$, and $\phi_j$ be the eigenfunctions such that
\[
-\Delta_{g_0}\phi_j=\omega^2_j\phi_j,\quad \phi_j\in H^1_0(M_0).
\]
Let
\[
\hat{f}(x_1,j)=\int_{M_0}f(x_1,x')\phi(x')\mathrm{d}V_{g_0}(x'),\quad j=1,2,\cdots,
\]
be the Fourier coefficients of $f(x_1,\cdot)$. One has the expansion
\[
f(x_1,x')=\sum_{j=1}^\infty \hat{f}(x_1,j)\phi_j(x').
\]
Assume that $r$ has the expansion
\[
r(x_1,x')=\sum_{j=1}^\infty \hat{r}(x_1,j)\phi_j(x').
\]
Insert the eigenfunction expansions of $f$ and $r$ into equation \eqref{eq_uf}, we obtain the ODEs
\begin{equation}\label{ODE}
\left(-\partial^2_1+2(\tau+\mathrm{i}\lambda)\partial_1+\omega_j^2-k^2-(\tau+\mathrm{i}\lambda)^2\right)\hat{r}(x_1,j)=\hat{f}(x_1,j).
\end{equation}
The symbol of the above ODE operator is $p(\xi_1,j)=\xi_1^2+2\mathrm{i}(\tau+\mathrm{i}\lambda)\xi_1-k^2+\omega^2-(\tau+\mathrm{i}\lambda)^2$. Notice that
\[
\Re p(\xi_1,j)=\xi_1^2-2\lambda\xi_1-k^2+\omega^2_j-\tau^2+\lambda^2,\quad \Im p(\xi_1,j)=2\tau(\xi_1-\lambda).
\]
Thus the symbol $p(\xi_1,j)$ is vanishing for $\tau\geq 1$ only if $\xi_1=\lambda$ and $-k^2+\omega_j^2-\tau^2=0$, which is impossible when $\tau^2$ is not an eigenvalue of $-\Delta_{g_0}-k^2$. This proves the uniqueness of the solution to \eqref{eq_uf}.

To show the existence, we observe that
\[
\begin{split}
&-\partial^2_1+2(\tau+\mathrm{i}\lambda)\partial_1+\omega_j^2-k^2-(\tau+\mathrm{i}\lambda)^2\\
=&-\left(\partial_1-\tau-\mathrm{i}\lambda+\sqrt{\omega^2_j-k^2}\right)\left(\partial_1-\tau-\mathrm{i}\lambda-\sqrt{\omega^2_j-k^2}\right).
\end{split}
\]
Using Lemma \ref{lemma_ode}, we have
\[
\hat{r}(\cdot,j)=S_{\tau+\mathrm{i}\lambda-\sqrt{\omega^2_j-k^2}}S_{\tau+\mathrm{i}\lambda+\sqrt{\omega^2_j-k^2}}\hat{f}(\cdot,j).
\]
Assume $\tau>0$ (the case $\tau<0$ is analogous). Then $\Re (\tau+\mathrm{i}\lambda+\sqrt{\omega^2_j-k^2})\geq 1$, and $\Re (\tau+\mathrm{i}\lambda-\sqrt{\omega^2_j-k^2})\neq 0$, so we have
\[
\|\hat{r}(\cdot,j)\|_{L_{-\delta}^2}\leq \frac{C}{\tau}\|\hat{f}(\cdot,j)\|_{L_\delta^2},
\]
and consequently
\[
\|r\|_{L^2(\mathcal{N})}\leq \frac{C}{\tau}\|f\|_{L^2(\mathcal{N})}.
\]
By taking $\partial_1^s$ of both sides of \eqref{ODE}, we obtain
\[
\|\partial^s_1\hat{r}(\cdot,j)\|_{L_{-\delta}^2}\leq \frac{C}{\tau}\|\partial^s_1\hat{f}(\cdot,j)\|_{L_\delta^2}.
\]
Multiplying both sides of \eqref{ODE} by $\omega_j$, we obtain
\[
\|\omega_j^s\hat{r}(\cdot,j)\|_{L_{-\delta}^2}\leq \frac{C}{\tau}\|\omega_j^s\hat{f}(\cdot,j)\|_{L_{\delta}^2}.
\]
Note that
\[
\|r\|_{H^m(\mathcal{N})}^2=\sum_{j=1}^\infty\sum_{s=0}^m \int |\partial_1^s\hat{r}(x_1,j)|^2+|\omega_j^s\hat{r}(x_1,j)|^2\mathrm{d}x_1.
\]
Therefore, we can conclude that
\[
\|r\|_{H^m(\mathcal{N})}\leq \frac{C}{\tau}\|f\|_{H^m(\mathcal{N})}.
\]
\end{proof}

We summarize the properties of CGO solutions in the following proposition.
\begin{proposition}
	Let $(M,g)$ be a transversally anisotropic manifold compactly supported in $(\mathbb{R}\times M_0,e\oplus g_0)$ with $(M_0,g_0)$ a simple manifold. Let $K,m\in \mathbb{N}$. For any $k,\tau,\lambda$, $k^2+\tau^2-\lambda^2\geq 1$, $\tau>1$, there is a solution of the equation $-\Delta_gu-k^2u=0$ having the form
	\begin{equation}
    u_{\tau+\mathrm{i}\lambda}=e^{(\tau+\mathrm{i}\lambda)x_1}(\widetilde{v}_{\tau+\mathrm{i}\lambda}+r),
	\end{equation}
	where $x_1$ is the coordinate along $\mathbb{R}$, $\widetilde{v}_{\tau+\mathrm{i}\lambda}$ is a family in $(M_0,g_0)$ satisfying \eqref{gaussianp1}, \eqref{gaussianp2} with $K$ chosen large enough, and the remainder term $r$ satisfies
	\[
	\|r\|_{H^m(M)}\lesssim (k^2+\tau^2-\lambda^2)^{-K/2}e^{\sigma|\lambda|}.
	\]
\end{proposition}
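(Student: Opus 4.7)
The proposition is the culmination of the entire section, so my plan is to assemble the pieces already built in the text rather than prove anything new. I will proceed as follows.

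\textbf{Step 1: Reduce to two solvability problems via conjugation.} First I would plug the ansatz $u_{\tau+\mathrm{i}\lambda}=e^{(\tau+\mathrm{i}\lambda)x_1}(\widetilde{v}_{\tau+\mathrm{i}\lambda}(x')+r(x_1,x'))$ into $-\Delta_g u-k^2 u=0$ and conjugate by $e^{-(\tau+\mathrm{i}\lambda)x_1}$. Using $\Delta_g=\partial_1^2+\Delta_{g_0}$ and the fact that $\widetilde{v}_{\tau+\mathrm{i}\lambda}$ is independent of $x_1$, the conjugated equation collapses to
\[
(-\Delta_{g_0}-s^2)\widetilde{v}_{\tau+\mathrm{i}\lambda}+\bigl(-\partial_1^2+2(\tau+\mathrm{i}\lambda)\partial_1-\Delta_{g_0}-s^2\bigr)r=0,
\]
where $s=\sqrt{k^2+(\tau+\mathrm{i}\lambda)^2}$. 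So it suffices to (a) build $\widetilde{v}_{\tau+\mathrm{i}\lambda}$ on $M_0$ with the Helmholtz defect $f:=(-\Delta_{g_0}-s^2)\widetilde{v}_{\tau+\mathrm{i}\lambda}$ small in $H^m$, and then (b) solve the transport-type equation for $r$ on a cylinder containing $M$ with right-hand side $-f$.

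\textbf{Step 2: Invoke the Gaussian beam construction for $\widetilde{v}_{\tau+\mathrm{i}\lambda}$.} The whole earlier discussion in Section~\ref{CGO} supplies $\widetilde{v}_{\tau+\mathrm{i}\lambda}=(\sqrt{k^2+\tau^2-\lambda^2})^{(n-2)/8}e^{\mathrm{i}s\Theta}a$, concentrated along a nontangential geodesic $\gamma$ in $M_0$, with phase $\Theta$ chosen so that $\langle\mathrm{d}\Theta,\mathrm{d}\Theta\rangle_{g_0}=1$ to $N$th order on $\gamma$ and $\Im(D^2\Theta)(\gamma(t))$ positive on the normal bundle, and amplitude $a=\chi(y/\delta')\sum_{j=0}^{N}s^{-j}a_j$ satisfying the transport equations to $N$th order. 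Choosing $N=N(K,m)$ large enough and combining the pointwise bound \eqref{normPhi} from Lemma~\ref{est_s} with the explicit form of the defect $f$, one obtains \eqref{gaussianp1} and \eqref{gaussianp2} with constant $\sigma$ of size $\sqrt{5}\,\mathrm{diam}_{g_0}(M_0)$. I would simply quote this construction rather than redo it.

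\textbf{Step 3: Solve the equation for $r$ via Lemma~\ref{lemma_cylinder}.} Since $(M,g)\subset\subset(\mathbb{R}\times M_0,g)$, I would fix a large $T$ so that $M\subset\mathcal{N}:=(0,T)\times M_0$. On $\mathcal{N}$, Lemma~\ref{lemma_cylinder} (with $|\tau|\geq 1$) produces a solution $r$ of
\[
(-\partial_1^2+2(\tau+\mathrm{i}\lambda)\partial_1-\Delta_{g_0}-s^2)r=f
\]
satisfying $\|r\|_{H^m(\mathcal{N})}\leq C\tau^{-1}\|f\|_{H^m(\mathcal{N})}$. Because $f$ is independent of $x_1$, $\|f\|_{H^m(\mathcal{N})}\lesssim\|f\|_{H^m(M_0)}$, and the bound \eqref{gaussianp1} then yields
\[
\|r\|_{H^m(M)}\leq\|r\|_{H^m(\mathcal{N})}\lesssim\tau^{-1}(k^2+\tau^2-\lambda^2)^{-K/2}e^{\sigma|\lambda|}\lesssim (k^2+\tau^2-\lambda^2)^{-K/2}e^{\sigma|\lambda|},
\]
which is precisely the remainder estimate claimed in the proposition. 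Setting $u_{\tau+\mathrm{i}\lambda}=e^{(\tau+\mathrm{i}\lambda)x_1}(\widetilde{v}_{\tau+\mathrm{i}\lambda}+r)$ then solves $-\Delta_g u-k^2 u=0$ by Step~1.

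\textbf{Main obstacle.} Genuinely, the only subtle point is the bookkeeping of the parameters: one needs to verify that the constant $K$ appearing in the remainder estimate can absorb the polynomial losses in $|s|$ coming from differentiating $f$ up to order $m$, while the $e^{\sigma|\lambda|}$ factor remains controlled by the exponential weight $\sqrt{5}|\lambda|$ produced by Lemma~\ref{est_s}. This is resolved exactly as in the displayed estimate $|s|^4 e^{\sigma|\lambda|}\lesssim(k^2+\tau^2-\lambda^2)^2 e^{\sigma'|\lambda|}$ on page preceding the statement, by choosing $N=N(K,m)$ sufficiently large. Everything else is a direct quotation of the preceding lemmas.
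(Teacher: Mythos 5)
Your proposal is correct and follows essentially the same route as the paper: conjugation by $e^{(\tau+\mathrm{i}\lambda)x_1}$, quoting the Gaussian beam quasimode construction on $M_0$ for \eqref{gaussianp1}--\eqref{gaussianp2}, and then solving for the remainder $r$ on a cylinder $(0,T)\times M_0$ via Lemma~\ref{lemma_cylinder}, with the $N=N(K,m)$ bookkeeping handled exactly as in the displayed estimates preceding the proposition. The paper itself offers no separate proof of this proposition beyond the preceding construction, and your assembly of those pieces is faithful to it.
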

\section{Proof of the main result}\label{proof}
In this section we will provide the stability estimates for the problems under consideration, that is, recovering $c$ from the linearized DtN map $D^3_0\Lambda_c$ or $\Lambda_c'$. Since the inverse problem is linear, we only need to control the size of $c$ by its corresponding boundary data.\\

We first state a relation between the two linearizations $D^3_0\Lambda_c$ and $\Lambda_c'$. Notice that
\[
6v_1v_2v_3=(v_1+v_2+v_3)^3-(v_1+v_2)^3-(v_1+v_3)^3-(v_2+v_3)^3+v_1^3+v_2^3+v_3^3.
\]
Thus we obtain
\begin{equation}\label{relation}
\begin{split}
&D^3_0\Lambda_c(f_1,f_2,f_3)\\
=&\Lambda'_c(f_1+f_2+f_3)-\Lambda'_c(f_1+f_2)-\Lambda'_c(f_1+f_3)-\Lambda'_c(f_2+f_3)\\
&+\Lambda'_c(f_1)+\Lambda'_c(f_2)+\Lambda'_c(f_3).
\end{split}
\end{equation}

Because the operators $D^3_0\Lambda_c$ and $\Lambda'_c$ are related in the above way, we only need to consider the problem of recovering $c$ from $D^3_0\Lambda_c$.

The main result of this article is the following stability estimate.
\begin{theorem}\label{maintheorem}
	Denote $\epsilon=\sup_{\|f_j\|_{C^2(\partial M)}\leq 1}\|D^3_0\Lambda_c(f_1,f_2,f_3)\|_{L^{4/3}(\partial M)}$.
Let $\|c\|_{H^1}\leq \mathcal{M}$ and $k>1$, $\epsilon<1$, then	we have the stability estimate
	\begin{equation}\label{mainestimate}
	\|c\|_{L^2}^2\lesssim k^{12+\frac{3n}{4}}(\log k)\epsilon^2+E^{12+\frac{3n}{4}}(\log E)\epsilon+\frac{\log E}{(E^2+k^2)^{1/4}}+\frac{1}{(\log E)^2+(\log k)^2}.
	\end{equation}
	with $E=-\ln \epsilon$.
\end{theorem}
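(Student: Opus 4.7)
The plan is to adapt the Euclidean scheme of \cite{lu2022increasing} to the transversally anisotropic setting by combining the CGO solutions of Section \ref{CGO} with a partial Fourier transform in the $x_1$ direction and the stability of the geodesic X-ray transform on the simple manifold $(M_0,g_0)$. First I would derive an Alessandrini-type identity: applying the standard integration-by-parts argument to the multilinearization $D^3_0\Lambda_c$, for any triple $v_1,v_2,v_3$ of solutions to $\Delta_g v+k^2 v=0$ one has
\[
\Bigl|\int_M c\,v_1v_2v_3\,dV_g\Bigr|\;\le\;C\epsilon\prod_{j=1}^3\|v_j\vert_{\partial M}\|_{C^2(\partial M)}.
\]

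Into this identity I would insert three CGO solutions from Section \ref{CGO} with parameters $(\tau_j+i\lambda_j)$ chosen so that the $x_1$-exponentials combine into the purely oscillatory factor $e^{-i\xi_1 x_1}$ for a free Fourier frequency $\xi_1\in\mathbb{R}$, and with Gaussian beams $\widetilde v_{\tau_j+i\lambda_j}$ localized on a common geodesic $\gamma$ of $(M_0,g_0)$ together with suitable complex conjugations ensuring that the total transversal phase has negligible real part along $\gamma$. For example, I would take $(\tau_1,\tau_2,\tau_3)$ of the type $(\tau,-\tau,0)$, minimally perturbed to respect the restriction $\tau_j>1$ (using the sign symmetry $x_1\to -x_1$ to build CGOs with $\tau<-1$, and replacing the zero slot by a small but valid value compensated in $\lambda_3$), with $\lambda_1+\lambda_2+\lambda_3=-\xi_1$. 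Invoking Fubini in $x_1$ and the concentration of the Gaussian beams, the leading contribution reduces to a weighted X-ray transform
\[
\int_0^L w(t)\,\widehat c(\xi_1,\gamma(t))\,dt
\]
of the partial Fourier transform $\widehat c(\xi_1,\cdot)$, with smooth nonvanishing weight $w$; all residual terms are dominated by expressions of the form $e^{\sigma|\lambda|}(k^2+\tau^2-\lambda^2)^{-K/2}$ coming from \eqref{gaussianp1}, \eqref{gaussianp2}, and the remainder estimate for $r$.

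Varying $\gamma$ over all nontangential geodesics in $M_0$ and applying the stability of the X-ray transform on simple manifolds, I would pass from the ray-transform bound to an $L^2(M_0)$-estimate of $\widehat c(\xi_1,\cdot)$. Plancherel in $x_1$ gives
\[
\|c\|_{L^2(M)}^2=\int_{\mathbb{R}}\|\widehat c(\xi_1,\cdot)\|_{L^2(M_0)}^2\,d\xi_1,
\]
which I would split at $|\xi_1|\le R$ and $|\xi_1|>R$. The tail is controlled at rate $R^{-2}$ by the a priori bound $\|c\|_{H^1}\le\mathcal{M}$, while the low-frequency part is handled by the CGO-derived estimate. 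Tuning the three free parameters $R$, $\tau$, $\lambda$ to equalize the competing contributions then yields \eqref{mainestimate}: the improving term $(E^2+k^2)^{-1/4}$ arises from the choice $R\sim(E^2+k^2)^{1/2}$, while the residual $1/((\log E)^2+(\log k)^2)$ reflects the optimal $\lambda\sim\log E$ needed to cancel the ubiquitous factor $e^{\sigma|\lambda|}$.

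The principal obstacle is precisely this $e^{\sigma|\lambda|}$ factor appearing in every Gaussian beam estimate, which forces $\lambda$ to stay on a logarithmic scale against $\epsilon^{-1}$ and thereby limits how large $R$ can be taken before errors overwhelm the gain. Balancing this exponential amplification against the polynomial decay afforded by $\|r\|_{H^m(M)}\lesssim(k^2+\tau^2-\lambda^2)^{-K/2}e^{\sigma|\lambda|}$ and the polynomial stability of the X-ray transform, while keeping $\tau^2$ away from the Dirichlet spectrum of $-\Delta_{g_0}-k^2$ so that Lemma \ref{lemma_cylinder} applies, is the technical heart of the proof.
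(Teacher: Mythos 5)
There is a genuine gap, and it starts at the very first step. The quantity controlled by $\epsilon$ is not $\int_M c\,v_1v_2v_3\,dV_g$: the map $D^3_0\Lambda_c(f_1,f_2,f_3)=\partial_\nu w$ is Neumann data on $\partial M$, and to convert it into a bulk integral you must pair it with a \emph{fourth} solution of $\Delta_g v+k^2v=0$. Integrating \eqref{eq_w} against $v_4$ gives the identity the paper actually uses,
\[
6\int_M c\,v_1v_2v_3v_4\,dV_g=\int_{\partial M}D^3_0\Lambda_c(f_1,f_2,f_3)\,f_4\,dV_g ,
\]
so the correct starting inequality involves a product of \emph{four} CGO solutions. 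This is not a cosmetic difference: with four factors the real exponentials $e^{\pm\tau x_1}$ cancel in conjugate pairs $(-\tau,+\tau,-\tau,+\tau)$ while the imaginary parts leave $e^{-2\mathrm{i}\lambda x_1}$, and all four solutions keep $|\tau_j|=\tau$ large so that the remainder bound $\|r_j\|\lesssim(k^2+\tau^2-\lambda^2)^{-K/2}e^{\sigma|\lambda|}$ from Lemma \ref{lemma_cylinder} is available for every factor. Your three-factor scheme forces $\tau_1+\tau_2+\tau_3=0$, and your proposed fix of putting one $\tau_j$ near zero destroys that factor's CGO structure (the remainder estimate degenerates like $1/\tau_j$). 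Moreover, with an odd number of factors the transversal Gaussian-beam phases cannot be paired into $|\widetilde v|^2$-type moduli, so one unpaired oscillatory factor $e^{\mathrm{i}s\Theta}$ survives and the concentration argument no longer produces a positive weight.

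The second divergence is the reduction to a weighted geodesic X-ray transform of $\widehat c(\xi_1,\cdot)$ followed by X-ray stability on the simple manifold. The paper deliberately avoids this: it places the two conjugate pairs of Gaussian beams on \emph{two distinct geodesics} $\gamma$ and $\eta$ intersecting at a single point $y_0$, so that $|\widetilde v_{\tau+\mathrm{i}\lambda}|^2|\widetilde w_\tau|^2$ concentrates at $y_0$ and a Laplace-type asymptotic (Lemma 6 of \cite{lassas2020uniqueness}) yields the \emph{pointwise} bound on $|\hat c(2\lambda,y_0)|$ in \eqref{est0}, with constants tracked explicitly in $\tau,k,\lambda$. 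This pointwise recovery is exactly the advantage of the cubic nonlinearity emphasized in the introduction; routing through the X-ray transform would reintroduce the linear-problem machinery, require a quantitative stability constant for the transform that you have not supplied, and would not obviously reproduce the exponents $12+\tfrac{3n}{4}$ and the term $(E^2+k^2)^{-1/4}$ in \eqref{mainestimate}. Your final discussion of the $e^{\sigma|\lambda|}$ obstruction and the logarithmic choice of the frequency cutoff is in the right spirit and matches the paper's Case 1/Case 2 splitting, but the core mechanism for extracting $\widehat c$ from the data needs to be rebuilt around four solutions and two transversal geodesics.
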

\begin{remark}
	We note that when $k\rightarrow+\infty$, the stability estimate \eqref{mainestimate} approaches to a H\"older type. When $k$ is small, the stability is a double logarithmic type.
\end{remark}

For the inverse problem of recovering $c$ from $\Lambda'_c$, we can obtain the same stability estimate simply by invoking \eqref{relation}.
\begin{theorem}
	Denote $\epsilon=\sup_{\|f\|_{C^2(\partial M)}\leq 1}\|\Lambda'_cf\|_{L^{4/3}(\partial M)}$.
	Let $\|c\|_{H^1}\leq \mathcal{M}$ and $k>1$, $\epsilon<1$, then	we have the stability estimate
	\begin{equation}
	\|c\|_{L^2}^2\lesssim k^{12+\frac{3n}{4}}(\log k)\epsilon^2+E^{12+\frac{3n}{4}}(\log E)\epsilon+\frac{\log E}{(E^2+k^2)^{1/4}}+\frac{1}{(\log E)^2+(\log k)^2}
	\end{equation}
	with $E=-\ln \epsilon$.
\end{theorem}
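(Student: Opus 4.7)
The plan is to use the standard Alessandrini-style integral identity together with the CGO solutions constructed in Section \ref{CGO}, while carefully tracking the dependence on $k$, $\tau$, and $\lambda$. Multiplying \eqref{eq_w} by any $v_4$ satisfying $(\Delta_g+k^2)v_4=0$ and integrating by parts gives
\[
6\int_M c\,v_1v_2v_3v_4\,dV_g \;=\; \int_{\partial M} D^3_0\Lambda_c(f_1,f_2,f_3)\,v_4\,dS,
\]
so H\"older bounds the right-hand side by $\epsilon\prod_{j=1}^3\|f_j\|_{C^2(\partial M)}\,\|v_4\|_{L^4(\partial M)}$. I would then substitute four CGO solutions $v_j=e^{(\tau_j+\mathrm{i}\lambda_j)x_1}(\widetilde v_{\tau_j+\mathrm{i}\lambda_j}+r_j)$ whose parameters are tuned so that (i) $\sum_j\tau_j=0$, killing the growing exponential in $x_1$; (ii) $\sum_j\lambda_j=-\xi_1$ for a prescribed $\xi_1\in\mathbb{R}$, producing the factor $e^{-\mathrm{i}\xi_1 x_1}$; and (iii) the four Gaussian beams $\widetilde v_{\tau_j+\mathrm{i}\lambda_j}$ are built along a common non-tangential geodesic $\gamma\subset M_0$, in two conjugate forward/backward pairs, so that $\sum_j\mathrm{i}s_j\Theta_j$ remains bounded along $\gamma$.

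Expanding the product $v_1v_2v_3v_4$ and using \eqref{normPhi} together with the Gaussian concentration, the leading contribution is (a constant multiple of) the $x_1$-Fourier transform at $\xi_1$ of the geodesic ray transform $I_0 c(x_1,\cdot)(\gamma)$ on $M_0$. The subleading terms each contain a factor $r_j$ and, by the remainder bound $\|r_j\|_{H^m}\lesssim (k^2+\tau^2-\lambda_j^2)^{-K/2}e^{\sigma|\lambda_j|}$ combined with \eqref{gaussianp2}, are absorbed once $K$ is chosen large. Accounting also for the growth $\|v_4\|_{L^4(\partial M)}\lesssim e^{C(\tau+|\lambda|)}$ on the data side gives a schematic pointwise estimate
\[
\bigl|\widehat{I_0 c}(\xi_1,\gamma)\bigr|\;\lesssim\;e^{C(\tau+|\lambda|)}\epsilon\;+\;(k^2+\tau^2-\lambda^2)^{-K/2}e^{\sigma|\lambda|}\mathcal{M},
\]
uniformly in $\xi_1$ and $\gamma$ subject to $k^2+\tau^2-\lambda_j^2\geq 1$. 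I would then combine Plancherel in $x_1$ with the $L^2$-stability of $I_0$ on the simple manifold $(M_0,g_0)$ to pass back from $\widehat{I_0 c}$ to $c$ in the $L^2(M)$-norm, introducing a frequency cutoff $\rho$ that splits low frequencies (controlled by the above estimate) from high frequencies (controlled by the a priori bound $\|c\|_{H^1}\leq\mathcal{M}$).

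The four terms of \eqref{mainestimate} should emerge from optimizing $\tau,\lambda,\rho$ in two regimes: $\rho\lesssim k$ with $\tau\sim k$ produces the H\"older-type term $k^{12+3n/4}(\log k)\epsilon^2$, while $\rho\sim E=-\ln\epsilon$ with $\tau\sim E$ produces $E^{12+3n/4}(\log E)\epsilon$; the polynomial tail $(E^2+k^2)^{-1/4}\log E$ comes from the transverse (geodesic) high-frequencies reached by $\sqrt{\tau^2+k^2}\sim\sqrt{E^2+k^2}$, and the double-logarithmic term $((\log E)^2+(\log k)^2)^{-1}$ comes from the direction conjugate to $x_1$, where only a logarithmic conditional stability is available via the ray-transform inversion. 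The hardest point will be the simultaneous selection of $(\tau_j,\lambda_j)$: they must cancel the $x_1$-exponent, prescribe $\xi_1$, allow the phase $\sum_j\mathrm{i}s_j\Theta_j$ to cancel along $\gamma$ in the $M_0$-direction, and keep the losses $e^{\sigma|\lambda_j|}$ small enough that the increasing-stability phenomenon actually appears. The tight constraint $k^2+\tau_j^2-\lambda_j^2\geq 1$ forces the exponential loss $e^{\sigma|\lambda|}$ to be balanced against $\epsilon$ through $E=-\ln\epsilon$, and the specific powers $12+3n/4$ record the cost of this balance, including the $(\sqrt{k^2+\tau^2-\lambda^2})^{(n-2)/8}$ normalization of the Gaussian beams appearing to the fourth power.
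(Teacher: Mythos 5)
There is a genuine gap, and it concerns precisely the point that distinguishes this theorem from Theorem \ref{maintheorem}. Here $\epsilon$ is defined through $\Lambda'_c$, yet your integral identity and the ensuing H\"older bound use $D^3_0\Lambda_c$ as if it were controlled by $\epsilon$; you never supply the bridge. The paper's entire proof of this statement is that bridge: the polarization identity \eqref{relation} expresses $D^3_0\Lambda_c(f_1,f_2,f_3)$ as a finite signed sum of $\Lambda'_c$ evaluated at $f_1+f_2+f_3$, $f_i+f_j$, and $f_i$, each of which has $C^2(\partial M)$-norm at most $3$ when $\|f_j\|_{C^2(\partial M)}\leq 1$, so $\sup\|D^3_0\Lambda_c(f_1,f_2,f_3)\|_{L^{4/3}(\partial M)}\lesssim\epsilon$ and Theorem \ref{maintheorem} applies verbatim. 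Without that step your argument does not address the stated hypothesis at all.

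Beyond this, your re-derivation of the underlying estimate diverges from the paper in a way that would not reproduce \eqref{mainestimate}. You place all four Gaussian beams along a single geodesic $\gamma$ and propose to recover $c$ by inverting the geodesic ray transform $I_0$ on $(M_0,g_0)$; the paper instead concentrates $v_1,v_2$ near $\gamma$ and $v_3,v_4$ near a second geodesic $\eta$ meeting $\gamma$ only at $y_0$, so that $|\widetilde v_{\tau+\mathrm{i}\lambda}|^2|\widetilde w_\tau|^2$ localizes at the single point $y_0$ and a Laplace-type expansion yields $\hat c(2\lambda,y_0)$ directly, with no ray-transform inversion and hence no additional stability loss from that inversion (this pointwise recovery is exactly what the cubic nonlinearity buys). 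Your route would require a quantitative stability estimate for the weighted transform $I_0$ (with a weight coming from $|a_0|^4$ bounded below), which you only gesture at, and it would alter the exponents. Relatedly, your attribution of the double-logarithmic term to "the direction conjugate to $x_1$ \ldots via the ray-transform inversion" is incorrect: in the paper that term comes from the high-$\lambda$ tail $\int_{|\lambda|\geq\rho}|\hat c(2\lambda,x')|^2\,\mathrm{d}\lambda\leq \mathcal{M}/(1+\rho^2)$ under the a priori bound $\|c\|_{H^1}\leq\mathcal{M}$, with $\rho$ forced to be only logarithmic in $E$ and $k$ by the exponential loss $e^{\sigma|\lambda|}$ in the CGO estimates.
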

To prove the main theorem, we start with the Calder\'on-type identity
\begin{equation}
6\int_{M}c(x)v_1v_2v_3v_4\mathrm{d}V_g=\int_{\partial M}D^3_0\Lambda_c(f_1,f_2,f_3)f_4\mathrm{d}V_g,
\end{equation}
where each $v_j$ solves $\Delta_g v_j+k^2v_j=0$ with $v_j\vert_{\partial M}=f_j$. This can be obtained by integrating the equation \eqref{eq_w} against $v_4$. Therefore,
\begin{equation}\label{ineq_main}
\left|\int_{M}c(x)v_1v_2v_3v_4\mathrm{d}V_g\right|\leq C\epsilon (\Pi_{j=1}^3\|f_j\|_{C^2(\partial M)})\|f_4\|_{L^4(\partial M)}.
\end{equation}
For the proof of the stability estimate \eqref{mainestimate}, we need to plug the CGO solutions constructed in Section \ref{CGO} into the above inequality.

\subsection{Construction of four CGO solutions}
In this section, we will use similar settings as in \cite{lassas2021inverse}.
Let $\gamma,\eta\subset M_0$ be two geodesics intersecting only at one point $y_0\in M_0$, which is possible because $(M_0,g_0)$ is simple.

The CGO solutions are of the following forms
\[
\begin{split}
v_1(x)=e^{-(\tau+\mathrm{i}\lambda)x_1}(\widetilde{v}_{\tau+\mathrm{i}\lambda}(x')+r_1(x)),\quad v_2(x)=e^{(\tau-\mathrm{i}\lambda)x_1}(\overline{\widetilde{v}_{\tau+\mathrm{i}\lambda}(x')}+r_2(x)),\\
v_3(x)=e^{-\tau x_1}(\widetilde{w}_{\tau}(x')+r_3(x)),\quad v_4(x)=e^{\tau x_1}(\overline{\widetilde{w}_{\tau}(x')}+r_4(x)),
\end{split}
\]
where $\widetilde{v}_{\tau+\mathrm{i}\lambda}$ and $\widetilde{w}_{\tau}$ are the Gaussian beam solutions concentrating near $\gamma$ and $\eta$, respectively. Assume that
\[
\begin{split}
\widetilde{v}_{\tau+\mathrm{i}\lambda}=&(\sqrt{\tau^2+k^2-\lambda^2})^{\frac{n-2}{8}}e^{\mathrm{i}s\Phi(x')}\chi_1(x')a(x'),\\ \widetilde{w}_{\tau+\mathrm{i}\lambda}=&(\sqrt{\tau^2+k^2-\lambda^2})^{\frac{n-2}{8}}e^{\mathrm{i}\sqrt{k^2+\tau^2}\Psi(x')}\chi_2(x')b(x'),
\end{split}
\]
where $\chi_1$ is a cut-off function supported in a neighborhood of $\gamma$, and $\chi_2$ is a cut-off function supported in a neighborhood of $\eta$. The phase functions $\Phi$ and $\Psi$ satisfy
\[
\begin{split}
\Phi(\gamma(t))=t,\quad \nabla\Phi(\gamma(t))=\dot{\gamma}(t),\quad \Im(\nabla^2\Phi(\gamma(t)))\leq 0,\quad \Im(\nabla^2\Phi)\vert_{\dot{\gamma}(t)^\perp}>0,\\
\Psi(\eta(t))=t,\quad \nabla\Psi(\eta(t))=\dot{\eta}(t),\quad \Im(\nabla^2\Psi(\eta(t)))\leq 0,\quad \Im(\nabla^2\Psi)\vert_{\dot{\eta}(t)^\perp}>0.
\end{split}
\]
The amplitudes $a$ and $b$ admit the following asymptotics
\[
a=a_0+s^{-1}a_1+\cdots+s^{-N}a_N,\quad b=b_0+(\sqrt{k^2+\tau^2})^{-1}b_1+\cdots+(\sqrt{k^2+\tau^2})^{-N}b_N.
\]

Recall that one can choose $N$ large enough such that
\[
(\Delta_{g_0}+k^2+(\tau+\mathrm{i}\lambda)^2)\widetilde{v}_{\tau+\mathrm{i}\lambda}=\mathcal{O}((\tau^2+k^2-\lambda^2)^{-K/2}e^{\sigma|\lambda|}),
\]
\[
(\Delta_{g_0}+k^2+\tau^2)\widetilde{w}_{\tau}=\mathcal{O}((\tau^2+k^2-\lambda^2)^{-K/2}).
\]
Thus, the remainder term can be constructed to satisfy the estimates
\[
\begin{split}
\|r_j\|_{H^m(M)}\leq C(k^2+\tau^2-\lambda^2)^{-K/2}e^{\sigma|\lambda|},\\
\end{split}
\]
for $j=1,2$, and
\[
\begin{split}
\|r_j\|_{H^m(M)}\leq C(k^2+\tau^2-\lambda^2)^{-K/2},\\
\end{split}
\]
for $j=3,4$.
One can choose the value of $m$ such that
\[
\|r_j\|_{L^4(\partial M)}\leq \|r_j\|_{H^m(M)}\leq C(k^2+\tau^2-\lambda^2)^{-K/2}e^{\sigma|\lambda|}.
\]
for any $j=1,2$, and
\[
\begin{split}
\|r_j\|_{L^4(\partial M)}\leq \|r_j\|_{H^m(M)}\leq C(k^2+\tau^2-\lambda^2)^{-K/2},\\
\end{split}
\]
for $j=3,4$.

Notice that
\[
\begin{split}
\|e^{-(\tau+\mathrm{i}\lambda)x_1}\widetilde{v}_{\tau+\mathrm{i}\lambda}\|_{C^2(M)}
\leq &C(\sqrt{k^2+\tau^2-\lambda^2})^{\frac{n-2}{8}}(k^2+\tau^2+\lambda^2)|e^{-(\tau+\mathrm{i}\lambda)x_1}||e^{\mathrm{i}s\Phi(x')}|\\
\leq &C (k^2+\tau^2+\lambda^2)^{\frac{n+14}{16}}e^{D\tau}e^{\sigma|\lambda|}
\end{split}
\]
for some constant $\sigma>0$, where we have used \eqref{normPhi}.
Using also $H^m(M)\subset\subset C^2(M)$ for $m$ sufficiently large, we obtain
\[
\begin{split}
\|v_j\|_{C^2(M)}\leq&\|e^{-(\tau+\mathrm{i}\lambda)x_1}\widetilde{v}_{\tau+\mathrm{i}\lambda}\|_{C^2(M)}+\|e^{-(\tau+\mathrm{i}\lambda)x_1}r_j\|_{C^2(M)}\\
\leq& C(k^2+\tau^2+\lambda^2)^{\frac{n+14}{16}}e^{\sigma|\lambda|}e^{D\tau},
\end{split}
\]
for $j=1,2$. Similarly,
\[
\begin{split}
	\|v_j\|_{C^2(M)}
	\leq C(k^2+\tau^2)^{\frac{n+14}{16}}e^{D\tau},
\end{split}
\]
for $j=3,4$.

Therefore, by \eqref{ineq_main} we have
\begin{equation}
\left|\int_{M}c(x)v_1v_2v_3v_4\mathrm{d}V_g\right|\lesssim \epsilon (k^2+\tau^2+\lambda^2)^{\frac{3n+42}{16}}e^{\sigma|\lambda|}e^{D\tau},
\end{equation}
with possibly different $\sigma$ and $D$.

\subsection{Proof of Theorem \ref{maintheorem}}
Denote
\[
\hat{c}(\lambda,\cdot)=\int e^{-\mathrm{i}\lambda x_1}c(x_1,\cdot)\mathrm{d}x_1
\]
to be the Fourier transform of $c$ in $x_1$.
Notice that, since $|\widetilde{v}_{\tau+\mathrm{i}\lambda}|^2|\widetilde{w}_{\tau}|^2$ is supported in a neighborhood of $y_0$,
\[
\begin{split}
\int_{M}c v_1v_2v_3v_4\mathrm{d}V_g=&\int_{-\infty}^\infty\int_{M_0} ce^{-2\mathrm{i}\lambda x_1}|\widetilde{v}_{\tau+\mathrm{i}\lambda}|^2|\widetilde{w}_{\tau}|^2\mathrm{d}V_{g_0}\mathrm{d}x_1+\mathcal{O}((k^2+\tau^2-\lambda^2)^{-K/2}e^{\sigma|\lambda|})\\
=&\int_{B_\delta(y_0)}\hat{c}(2\lambda,\cdot)e^{-2\sqrt{\tau^2+k^2}\Im(\Phi+\Psi)}A\mathrm{d}V_{g_0}+\mathcal{O}((k^2+\tau^2-\lambda^2)^{-K/2}e^{\sigma|\lambda|}).
\end{split}
\]
Here $\hat{c}(2\lambda,\cdot)$ is the Fourier transform of $c(x_1,x')$ in the $x_1$-variable, i.e.,
\[
\hat{c}(\lambda,x')=\int_{-\infty}^\infty e^{-\mathrm{i}\lambda x_1}c(x_1,x')\mathrm{d}x_1,
\]
$B_\delta(y_0)$ is a small neighborhood of $y_0$ and
\[
A=(\sqrt{\tau^2+k^2-\lambda^2})^{\frac{n-2}{2}}e^{-2\Im(s\Phi-\sqrt{\tau^2+k^2}\Phi)}|\chi_1|^2|\chi_2|^2|a|^2|b|^2.
\]
Notice that $|a|^2|b|^2=|a_0|^2|b_0|^2+\mathcal{O}((\tau^2+k^2-\lambda^2)^{-1/2})$, one can write
\[
\begin{split}
&\left(\sqrt{\tau^2+k^2-\lambda^2}\right)^{\frac{1}{2}}\int_{M}c v_1v_2v_3v_4\mathrm{d}V_g\\
=&\left(\sqrt{\tau^2+k^2-\lambda^2}\right)^{\frac{n-1}{2}}\int_{B_\delta(y_0)}\hat{c}(2\lambda,\cdot)e^{-2\sqrt{\tau^2+k^2}\Im(\Phi+\Psi)}A_0\mathrm{d}V_{g_0}+\mathcal{O}((k^2+\tau^2-\lambda^2)^{-1/4}e^{\sigma|\lambda|}),
\end{split}
\]
where
\[
A_0=e^{-2\Im(s\Phi-\sqrt{\tau^2+k^2}\Phi)}|\chi_1|^2|\chi_2|^2|a_0|^2|b_0|^2.
\]
Let us rearrange
\[
B_0(z;\tau,k,\lambda)=A_0e^{-2\sqrt{\tau^2+k^2}\Im(\Phi+\Psi)+2\sqrt{\tau^2+k^2-\lambda^2}\Im(\Phi+\Psi)},
\]
and write
\[
\int_{B_\delta(y_0)}\hat{c}(2\lambda,\cdot)e^{-2\sqrt{\tau^2+k^2}\Im(\Phi+\Psi)}A_0\mathrm{d}V_{g_0}=\int_{B_\delta(y_0)}\hat{c}(2\lambda,\cdot)e^{-2\sqrt{\tau^2+k^2-\lambda^2}\Im(\Phi+\Psi)}B_0\mathrm{d}V_{g_0}.
\]
Notice that
\[
\begin{split}
|\sqrt{\tau^2+k^2}-\sqrt{\tau^2+k^2-\lambda^2}|=&\left|\frac{\lambda^2}{\sqrt{\tau^2+k^2}+\sqrt{\tau^2+k^2-\lambda^2}}\right|\\
\leq&\left|\frac{\lambda^2}{\sqrt{\tau^2+k^2}}\right|\\
\leq & |\lambda|
\end{split}
\]
and
\[
\left|s-\sqrt{\tau^2+k^2}\right|\leq \sqrt{5}|\lambda|
\]
for $\tau^2+k^2\geq \lambda^2$, we have
\[
\|B_0\|_{C^1(M_0)}\leq e^{\sigma|\lambda|},\quad |B_0|\geq e^{-\sigma|\lambda|},
\]
for some constant $\sigma>0$.

 For simplicity, we denote $\varsigma=\sqrt{\tau^2+k^2-\lambda^2}$. We now have
\begin{equation}
\varsigma^{\frac{n-1}{2}}\int_{B_\delta(y_0)}\hat{c}(2\lambda,\cdot)e^{-2\varsigma\Im(\Phi+\Psi)}B_0\mathrm{d}V_{g_0}=\varsigma^{1/2}\int_{M}c v_1v_2v_3v_4\mathrm{d}V_g+\mathcal{O}(\varsigma^{-1/2}e^{\sigma|\lambda|}).
\end{equation}

Notice that, in a neighborhood of $y_0$,
\[
\Im(\Phi+\Psi)(z)=2z\cdot\nabla^2\Im(\Phi+\Psi)\vert_{z=y_0}z=z\cdot \mathcal{H}z+\widehat{\Theta}(z),
\]
with $|\widehat{\Theta}(z)|=\mathcal{O}(|z|^3)$. Here $\mathcal{H}=\nabla^2\Im(\Phi+\Psi)\vert_{z=y_0}$ is a positive definite matrix, because $\nabla^2\Im\Phi$ and $\nabla^2\Im\Psi$ are positive semidefinite, and positive definite in directions transversal to $\dot{\gamma}$ and $\dot{\eta}$ respectively.
 Then we write
\[
\begin{split}
&\varsigma^{\frac{n-1}{2}}\int_{B_\delta(y_0)}\hat{c}(2\lambda,\cdot)e^{-2\varsigma\Im(\Phi+\Psi)}B_0\mathrm{d}V_{g_0}\\
=&\varsigma^{\frac{n-1}{2}}\int_{B_\delta(y_0)}\hat{c}(2\lambda,\cdot)e^{-2\varsigma  x\cdot\mathcal{H}x}B_0\mathrm{d}V_{g_0}\\
&+\varsigma^{\frac{n-1}{2}}\int_{B_\delta(y_0)}\hat{c}(2\lambda,\cdot)e^{-2\varsigma x\cdot\mathcal{H}x}\left(e^{-2\varsigma\widehat{\Theta}(x)}-1\right)B_0\mathrm{d}V_{g_0}\\
=&:I_1+I_2.
\end{split}
\]

We will use the following lemma (cf. \cite[Lemma 6]{lassas2020uniqueness}):
\begin{lemma}
For $b\in C_c^1(\mathbb{R}^d)$, a compactly supported function, we have the estimate
\[
\left|b(z_0)-\left(\frac{\varsigma}{\pi}\right)^{d/2}\int_{\mathbb{R}^d}b(z)e^{-\varsigma|z-z_0|^2}\mathrm{d}z\right|\leq C\|b\|_{C^1(\mathbb{R}^d)}\varsigma^{-1/2}.
\]
\end{lemma}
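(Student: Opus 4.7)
The plan is to exploit that the rescaled Gaussian $\left(\varsigma/\pi\right)^{d/2}e^{-\varsigma|z-z_0|^2}$ is an approximate identity on $\mathbb{R}^d$, with rate of convergence determined by the regularity of $b$; under $C^1$ regularity the natural rate is $\varsigma^{-1/2}$ because first-order differences scale like the characteristic width $\varsigma^{-1/2}$ of the Gaussian.

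First, I would record the normalization
\[
\left(\frac{\varsigma}{\pi}\right)^{d/2}\int_{\mathbb{R}^d}e^{-\varsigma|z-z_0|^2}\mathrm{d}z=1,
\]
which follows from the standard Gaussian integral after the substitution $w=\sqrt{\varsigma}(z-z_0)$. This allows rewriting the quantity to be estimated as
\[
b(z_0)-\left(\frac{\varsigma}{\pi}\right)^{d/2}\int_{\mathbb{R}^d}b(z)e^{-\varsigma|z-z_0|^2}\mathrm{d}z=\left(\frac{\varsigma}{\pi}\right)^{d/2}\int_{\mathbb{R}^d}\bigl(b(z_0)-b(z)\bigr)e^{-\varsigma|z-z_0|^2}\mathrm{d}z.
\]

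Next, since $b\in C_c^1(\mathbb{R}^d)$, the mean value inequality yields the pointwise bound $|b(z_0)-b(z)|\leq \|b\|_{C^1(\mathbb{R}^d)}|z-z_0|$. Substituting this in and then performing the rescaling $w=\sqrt{\varsigma}(z-z_0)$ (which contributes a factor $\varsigma^{-d/2}$ from $\mathrm{d}z$ and a factor $\varsigma^{-1/2}$ from $|z-z_0|$), the absolute value of the right-hand side is bounded by
\[
\|b\|_{C^1(\mathbb{R}^d)}\,\frac{1}{\pi^{d/2}\sqrt{\varsigma}}\int_{\mathbb{R}^d}|w|\,e^{-|w|^2}\mathrm{d}w.
\]
The remaining integral is a finite absolute constant (a first moment of the standard Gaussian, computable in polar coordinates as $\omega_{d-1}\int_0^\infty r^d e^{-r^2}\mathrm{d}r$), and the desired estimate follows with $C=\pi^{-d/2}\int_{\mathbb{R}^d}|w|e^{-|w|^2}\mathrm{d}w$.

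There is really no main obstacle here; the argument is a textbook approximate identity estimate and the $\varsigma^{-1/2}$ rate is forced by the scaling of $C^1$ increments against a Gaussian of width $\varsigma^{-1/2}$. The only small point to keep in mind is that compact support of $b$ is not needed for the bound itself, only to legitimize dominated convergence and the application of the mean value inequality globally; boundedness of $\|b\|_{C^1}$ suffices.
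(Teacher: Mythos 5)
Your proof is correct. Note that the paper does not prove this lemma at all --- it is quoted verbatim from \cite[Lemma 6]{lassas2020uniqueness} --- so there is no internal proof to compare against; your argument (normalize the Gaussian, subtract, apply the mean value inequality, and rescale by $w=\sqrt{\varsigma}(z-z_0)$ to extract the factor $\varsigma^{-1/2}$) is the standard approximate-identity computation and is complete, including the correct observation that only $\|b\|_{C^1(\mathbb{R}^d)}<\infty$ is genuinely needed.
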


Setting $d=n-1$ and using the above lemma, we have
\[
\begin{split}
I_1=&\varsigma^{\frac{n-1}{2}}\int_{B_\delta(y_0)}\hat{c}(2\lambda,\cdot)e^{-2\varsigma z\cdot\mathcal{H}z}B_0\mathrm{d}V_{g_0}\\
=&b_0(\tau,k,\lambda)\hat{c}(2\lambda,y_0)+\mathcal{O}(\varsigma^{-1/2}e^{\sigma|\lambda|}),
\end{split}
\]
where $b_0(\tau,k,\lambda)\neq 0$ is a constant depending on $\tau,k,\lambda$. Note that
\[
|b_0(\tau,k,\lambda)|=c_0B_0(y_0;\tau,k,\lambda)\geq c_0e^{-\sigma|\lambda|},
\]
for some constant $c_0$.

Next we estimate $I_2$. Recall that $\widehat{\Theta}(z)=\mathcal{O}(|z|^3)$, we have $\varsigma\widehat{\Theta}(\varsigma^{-1/2}z)=\tau^{-1/2}\mathcal{O}(|z|^3)$. Then we can deduce that
\[
\vert e^{-2\varsigma\widehat{\Theta}(\varsigma^{-1/2}z)}-1\vert\lesssim \varsigma^{-1/2}|z|^3e^{C\tau^{-1/2}|z|^3}.
\]
Therefore, one can estimate
\[
\begin{split}
|I_2|=&\left\vert\varsigma^{\frac{n-1}{2}}\int_{B_\delta(y_0)}\hat{c}(2\lambda,\cdot)e^{-2\varsigma z\cdot\mathcal{H}z}\left(e^{-2\varsigma\widehat{\Theta}(z)}-1\right)B_0\mathrm{d}V_{g_0}\right|\\
=&\left\vert\int_{B_\delta(y_0)}\hat{c}(2\lambda,\varsigma^{-1/2}z)e^{-2 z\cdot\mathcal{H}z}\left(e^{-2\varsigma\widehat{\Theta}(\varsigma^{-1/2}z)}-1\right)B_0(\varsigma^{-1/2}z)\mathrm{d}z\right\vert\\
=&\mathcal{O}(\varsigma^{-1/2}e^{\sigma|\lambda|}).
\end{split}
\]

Summarizing above estimates, we end up with
\[
|\hat{c}(2\lambda, y_0)|^2\lesssim \epsilon^2(k^2+\tau^2+\lambda^2)^{(3n+46)/8} e^{2D\tau}e^{\sigma|\lambda|}+(k^2+\tau^2-\lambda^2)^{-1/2}e^{\sigma|\lambda|}.
\]
We emphasize here that $y_0$ can be an arbitrary point in $M_0$. Therefore
\begin{equation}\label{est0}
|\hat{c}(2\lambda, x')|^2\leq C\epsilon^2(k^2+\tau^2+\lambda^2)^{(3n+46)/8} e^{2D\tau}e^{\sigma|\lambda|}+C(k^2+\tau^2-\lambda^2)^{-1/2}e^{\sigma|\lambda|}
\end{equation}
for any $x'\in M_0$, and the constant $C>0$ can be chosen to be uniform in $x'$ by a compactness argument.\\

Let $E=-\log \epsilon>0$.

\noindent\textbf{Case 1}: $k>E$. Notice that if $\|c\|_{H^1}\leq \mathcal{M}$, we have
\begin{equation}\label{log_est1}
\int_{M_0}\int_{|\lambda|\geq \frac{\log k}{2\sigma}} |\hat{c}(2\lambda,x')|^2\mathrm{d}\lambda\mathrm{d}V_{g_0}(x')\leq \frac{C \mathcal{M}}{(\log k)^2}.
\end{equation}
Take $\tau=1$ in \eqref{est0}, we have for $|\lambda|\leq\frac{\log k}{2\sigma}$
\begin{equation}\label{log_est2}
|\hat{c}(2\lambda, x')|^2\lesssim \epsilon^2k^{\frac{3n+46}{4}}e^{\frac{\log k}{2}}+k^{-1}e^{\frac{\log k}{2}}=k^{12+\frac{3n}{4}}\epsilon^2+k^{-1/2}.
\end{equation}
Combining \eqref{log_est1} and \eqref{log_est2}, we obtain
\[
\begin{split}
\|c\|_{L^2}^2&=\int_{M_0}\int |\hat{c}(2\lambda,x')|^2\mathrm{d}\lambda\mathrm{d}V_{g_0}(x')\\
&=\int_{M_0}\int_{|\lambda|\leq \frac{\log k}{2\sigma}} |\hat{c}(2\lambda,x')|^2\mathrm{d}\lambda\mathrm{d}V_{g_0}(x')+\int_{M_0}\int_{|\lambda|\geq \frac{\log k}{2\sigma}} |\hat{c}(2\lambda,x')|^2\mathrm{d}\lambda\mathrm{d}V_{g_0}(x')\\
&\lesssim k^{12+\frac{3n}{4}}(\log k)\epsilon^2+\frac{\log k}{\sqrt{k}}+\frac{1}{(\log k)^2}.
\end{split}
\]
Since $k>E$, $(\log k)^2>\frac{1}{2}((\log k)^2+(\log E)^2)$, we have
\[
\|c\|_{L^2}^2\lesssim k^{12+\frac{3n}{4}}(\log k)\epsilon^2+\frac{1}{(\log k)^2+(\log E)^2}.
\]

\noindent\textbf{Case 2}: $k\leq E$. Let $\rho=\frac{1}{4\sigma}\log(k^2+E^2)$.
Then
\[
\begin{split}
\|c\|_{L^2}^2=&\int_{M_0}\int |\hat{c}(2\lambda,x')|^2\mathrm{d}\lambda\mathrm{d}V_{g_0}(x')\\
=&\int_{M_0}\int_{0<|\lambda|<\rho} |\hat{c}(2\lambda,x')|^2\mathrm{d}\lambda\mathrm{d}V_{g_0}(x')+\int_{M_0}\int_{|\lambda|\geq \rho} |\hat{c}(2\lambda,x')|^2\mathrm{d}\lambda\mathrm{d}V_{g_0}(x').
\end{split}
\]
For $0<|\lambda|<\rho$, take $\tau=\frac{E}{2D}$. We have
\[
\epsilon^2(k^2+\tau^2+\lambda^2)^{(3n+46)/8} e^{2D\tau}e^{\sigma|\lambda|}\leq \epsilon^2(k^2+\tau^2+\rho^2)^{(3n+46)/8}e^{2D\tau}e^{\sigma\rho}\lesssim \epsilon^2(k^2+E^2)^{12+\frac{3n}{4}}\epsilon^{-1},
\]
and
\[
(k^2+\tau^2-\lambda^2)^{-1/2}e^{\sigma|\lambda|}\lesssim \frac{1}{\sqrt{k^2+E^2}}(k^2+E^2)^{1/4}=(k^2+E^2)^{-1/4}.
\]
Therefore, by \eqref{est0} we have
\begin{equation}\label{singlef_est2}
|\hat{c}(2\lambda, y_0)|^2 \lesssim \epsilon E^{12+\frac{3n}{4}}+(k^2+E^2)^{-1/4},\quad\text{ for }0<|\lambda|<\rho.
\end{equation}
Since, using the fact $k\leq E$,
\[
\int_{0<|\lambda|<\rho} \mathrm{d}\lambda=\frac{1}{2\sigma}\log(k^2+E^2)\lesssim  \log E,
\]
 we have
\begin{equation}\label{est2}
\int_{0<|\lambda|<\rho} |\hat{c}(2\lambda,y_0)|^2\mathrm{d}\lambda\lesssim\epsilon E^{12+\frac{3n}{4}}\log E +\frac{\log E}{(E^2+k^2)^{1/4}}.
\end{equation}
Also noticing that
\begin{equation}\label{est3}
\int_{|\lambda|\geq \rho} |\hat{c}(2\lambda,y_0)|^2\mathrm{d}\lambda\leq\frac{\mathcal{M}}{1+\rho^2}\lesssim\frac{1}{(\log E)^2+(\log k)^2}.
\end{equation}
and combining \eqref{est2}, \eqref{est3}, we obtain the estimate
\[
\|c\|_{L^2}^2\lesssim E^{12+\frac{3n}{4}}(\log E)\epsilon+\frac{\log E}{(E^2+k^2)^{1/4}}+\frac{1}{(\log E)^2+(\log k)^2}.
\]
This completes the proof of Theorem \ref{maintheorem}.

\begin{remark}
Using \eqref{log_est2} and \eqref{singlef_est2}, we have that, for $|\lambda|\leq\frac{\log k}{2\sigma}$, a logarithmic stability estimate
\[
|\hat{c}(2\lambda,x')|^2\lesssim k^{12+\frac{3n}{4}}\epsilon^2+\epsilon E^{12+\frac{3n}{4}}+E^{-\frac{1}{2}}
\]
holds. In Euclidean space, a better Lipschitz stability estimate in a larger interval can be achieved \textnormal{(cf. \cite[Theorem 2.1]{lu2022increasing})}.
\end{remark}

\bibliographystyle{abbrv}

\begin{thebibliography}{10}

\bibitem{alessandrini1988stable}
G.~Alessandrini.
\newblock Stable determination of conductivity by boundary measurements.
\newblock {\em Applicable Analysis}, 27(1-3):153--172, 1988.

\bibitem{assylbekov2021inverse}
Y.~M. Assylbekov and T.~Zhou.
\newblock Inverse problems for nonlinear Maxwell's equations with second
  harmonic generation.
\newblock {\em Journal of Differential Equations}, 296:148--169, 2021.

\bibitem{BaoetalReview2015}
G.~Bao, P.~Li, J.~Lin, and F.~Triki.
\newblock Inverse scattering problems with multi-frequencies.
\newblock {\em Inverse Problems}, 31, 093001, 21 pp, 2015.

\bibitem{caro2014stability}
P.~Caro and M.~Salo.
\newblock Stability of the {C}alder{\'o}n problem in admissible geometries.
\newblock {\em Inverse Problems \& Imaging}, 8(4):939, 2014.

\bibitem{carstea2020inverse}
C.~I. C{\^a}rstea.
\newblock On an inverse boundary value problem for a nonlinear time-harmonic
  Maxwell system.
\newblock {\em Journal of Inverse and Ill-posed Problems}, 2020.

\bibitem{carstea2021calderon}
C.~I. C{\^a}rstea, A.~Feizmohammadi, Y.~Kian, K.~Krupchyk, and G.~Uhlmann.
\newblock The Calder{\'o}n inverse problem for isotropic quasilinear
  conductivities.
\newblock {\em Advances in Mathematics}, 391:107956, 2021.

\bibitem{chen2021detection}
X.~Chen, M.~Lassas, L.~Oksanen, and G.~P. Paternain.
\newblock Detection of {H}ermitian connections in wave equations with cubic
  non-linearity.
\newblock {\em Journal of the European Mathematical Society}, 24(7):2191--2232,
  2021.

\bibitem{chen2021inverse}
X.~Chen, M.~Lassas, L.~Oksanen, and G.~P. Paternain.
\newblock Inverse problem for the {Y}ang--{M}ills equations.
\newblock {\em Communications in Mathematical Physics}, 384(2):1187--1225,
  2021.

\bibitem{dos2009limiting}
D.~Dos Santos~Ferreira, C.~E. Kenig, M.~Salo, and G.~Uhlmann.
\newblock Limiting {C}arleman weights and anisotropic inverse problems.
\newblock {\em Inventiones Mathematicae}, 178(1):119--171, 2009.

\bibitem{feizmohammadi2021inverse}
A.~Feizmohammadi, T.~Liimatainen, and Y.-H. Lin.
\newblock An inverse problem for a semilinear elliptic equation on conformally
  transversally anisotropic manifolds.
\newblock {\em arXiv preprint arXiv:2112.08305}, 2021.

\bibitem{feizmohammadi2020inverse}
A.~Feizmohammadi and L.~Oksanen.
\newblock An inverse problem for a semi-linear elliptic equation in
  {R}iemannian geometries.
\newblock {\em Journal of Differential Equations}, 269(6):4683--4719, 2020.

\bibitem{ferreira2016calderon}
D.~D.~S. Ferreira, Y.~Kurylev, M.~Lassas, and M.~Salo.
\newblock The {C}alder{\'o}n problem in transversally anisotropic geometries.
\newblock {\em Journal of the European Mathematical Society},
  18(11):2579--2626, 2016.

\bibitem{hintz2022dirichlet}
P.~Hintz, G.~Uhlmann, and J.~Zhai.
\newblock The {D}irichlet-to-{N}eumann map for a semilinear wave equation on
  {L}orentzian manifolds.
\newblock {\em Communications in Partial Differential Equations}, pages 1--38,
  2022.

\bibitem{isakov2007increased}
V.~Isakov.
\newblock Increased stability in the continuation for the Helmholtz equation
  with variable coefficient.
\newblock {\em Contemporary Mathematics}, 426:255--268, 2007.

\bibitem{isakov2011increasing}
V.~Isakov.
\newblock Increasing stability for the {S}chr{\"o}dinger potential from the
  {D}irichlet-to-{N}eumann map.
\newblock {\em Discrete \& Continuous Dynamical Systems-S}, 4(3):631, 2011.

\bibitem{isakov2016increasing}
V.~Isakov, R.-Y. Lai, and J.-N. Wang.
\newblock Increasing stability for the conductivity and attenuation
  coefficients.
\newblock {\em SIAM Journal on Mathematical Analysis}, 48(1):569--594, 2016.

\bibitem{isakov2020linearized}
V.~Isakov, S.~Lu, and B.~Xu.
\newblock Linearized inverse {S}chr{\"o}dinger potential problem at a large
  wavenumber.
\newblock {\em SIAM Journal on Applied Mathematics}, 80(1):338--358, 2020.

\bibitem{isakov1995global}
V.~Isakov and A.~I. Nachman.
\newblock Global uniqueness for a two-dimensional semilinear elliptic inverse
  problem.
\newblock {\em Transactions of the American Mathematical Society},
  347(9):3375--3390, 1995.

\bibitem{isakov2014increasing}
V.~Isakov and J.~Wang.
\newblock Increasing stability for determining the potential in the
  {S}chr{\"o}dinger equation with attenuation from the {D}irichlet-to-{N}eumann
  map.
\newblock {\em Inverse Problems and Imaging}, 8(4):1139--1150, 2014.

\bibitem{kian2022partial}
Y.~Kian, K.~Krupchyk, and G.~Uhlmann.
\newblock Partial data inverse problems for quasilinear conductivity equations.
\newblock {\em Mathematische Annalen}, pages 1--28, 2022.

\bibitem{krupchyk2020partial}
K.~Krupchyk and G.~Uhlmann.
\newblock Partial data inverse problems for semilinear elliptic equations with
  gradient nonlinearities.
\newblock {\em Mathematical Research Letters}, 27(6):1801--1824, 2020.

\bibitem{kurylev2018inverse}
Y.~Kurylev, M.~Lassas, and G.~Uhlmann.
\newblock Inverse problems for {L}orentzian manifolds and non-linear hyperbolic
  equations.
\newblock {\em Inventiones Mathematicae}, 212(3):781--857, 2018.

\bibitem{lassas2021inverse}
M.~Lassas, T.~Liimatainen, Y.-H. Lin, and M.~Salo.
\newblock Inverse problems for elliptic equations with power type
  nonlinearities.
\newblock {\em Journal de Math{\'e}matiques Pures et Appliqu{\'e}es},
  145:44--82, 2021.

\bibitem{lassas2020uniqueness}
M.~Lassas, T.~Liimatainen, L.~Potenciano-Machado, and T.~Tyni.
\newblock Uniqueness and stability of an inverse problem for a semi-linear wave
  equation.
\newblock {\em arXiv preprint arXiv:2006.13193}, 2020.

\bibitem{lu2022increasing}
S.~Lu, M.~Salo, and B.~Xu.
\newblock Increasing stability in the linearized inverse {S}chr{\"o}dinger
  potential problem with power type nonlinearities.
\newblock {\em Inverse Problems}, 38(6):065009, 2022.

\bibitem{mandache2001exponential}
N.~Mandache.
\newblock Exponential instability in an inverse problem for the
  {S}chr{\"o}dinger equation.
\newblock {\em Inverse Problems}, 17(5):1435, 2001.

\bibitem{salo2013calderon}
M.~Salo.
\newblock The {C}alder{\'o}n problem on {R}iemannian manifolds.
\newblock {\em Inverse problems and applications: inside out. II, Math. Sci.
  Res. Inst. Publ}, 60:167--247, 2013.

\bibitem{sun1996quasilinear}
Z.~Sun.
\newblock On a quasilinear inverse boundary value problem.
\newblock {\em Mathematische Zeitschrift}, 221(1):293--305, 1996.

\bibitem{sun1997inverse}
Z.~Sun and G.~Uhlmann.
\newblock Inverse problems in quasilinear anisotropic media.
\newblock {\em American journal of mathematics}, 119(4):771--797, 1997.

\bibitem{sylvester1987global}
J.~Sylvester and G.~Uhlmann.
\newblock A global uniqueness theorem for an inverse boundary value problem.
\newblock {\em Annals of Mathematics}, pages 153--169, 1987.

\bibitem{uhlmann2020determination}
G.~Uhlmann and Y.~Wang.
\newblock Determination of space-time structures from gravitational
  perturbations.
\newblock {\em Communications on Pure and Applied Mathematics},
  73(6):1315--1367, 2020.

\bibitem{uhlmann2022inverse}
G.~Uhlmann and Y.~Zhang.
\newblock Inverse boundary value problems for wave equations with quadratic
  nonlinearities.
\newblock {\em Journal of Differential Equations}, 309:558--607, 2022.


\bibitem{zlx2022submitted}
S.~Zou, S.~Lu and B.~Xu.
\newblock Increasing stability of the first order linearized inverse Schr\"odinger potential problem with integer power type nonlinearities
\newblock {\em submitted}, arXiv:2211.13562.


\end{thebibliography}

\end{document}